\documentclass[11pt]{article}

\usepackage[letterpaper,margin=0.84in]{geometry}

\usepackage{amsmath,amsthm}

\usepackage{tgtermes}
\usepackage{newtxtext}
\usepackage{newtxmath}
\usepackage{bm}

\usepackage{graphicx}
\graphicspath{{./}}

\usepackage{booktabs}
\usepackage{adjustbox}
\usepackage{longtable}
\usepackage{multirow}
\usepackage{enumitem}

\makeatletter
\let\arxiv@origfbox\fbox
\renewcommand{\fbox}[1]{%
  \begingroup
  \sbox\@tempboxa{#1}%
  \ifdim\wd\@tempboxa>\dimexpr\linewidth-2\fboxsep-2\fboxrule\relax
    \arxiv@origfbox{\resizebox{\dimexpr\linewidth-2\fboxsep-2\fboxrule\relax}{!}{#1}}%
  \else
    \arxiv@origfbox{#1}%
  \fi
  \endgroup
}
\makeatother

\usepackage{algorithm}
\usepackage{algpseudocode}

\usepackage{tikz}

\usepackage{natbib}
\bibpunct{(}{)}{,}{a}{}{,}

\usepackage[colorlinks=true,
            citecolor=blue,
            urlcolor=blue,
            linkcolor=blue]{hyperref}

\theoremstyle{plain}

\newtheorem{proposition}{Proposition}

\theoremstyle{definition}

\theoremstyle{remark}
\newtheorem{remark}{Remark}

\renewenvironment{proof}[1]{%
  \par\noindent\textbf{#1.}\quad\ignorespaces
}{\hfill\qed\par\medskip}

\title{Learning Optimization Proxies for Sequential Contextual Stochastic
       Programs: An Order Fulfillment Application}

\author{%
  Tinghan Ye$^{1}$\thanks{Corresponding author: \texttt{joe.ye@gatech.edu}}, \,
  Shuaicheng Tong$^{1}$, \,
  Changkun Guan$^{1}$, \,
  Beste Basciftci$^{2}$, \,
  Pascal Van Hentenryck$^{1}$
  \\[1ex]
  \small $^{1}$H.\ Milton Stewart School of Industrial and Systems Engineering,
  Georgia Institute of Technology, Atlanta, GA 30332\\
  \small $^{2}$Tippie College of Business, University of Iowa,
  Iowa City, IA 52242%
}

\date{}

\begin{document}
\maketitle

\begin{abstract}
\noindent Sequential contextual stochastic programs model real-time decision systems in which each time epoch commits to an action under uncertainty whose consequences propagate into future decisions. In many practical contexts, these programs require obtaining solutions rapidly as new information becomes available. These problems can be represented through scenario approximations to be solved by off-the-shelf optimization solvers, which achieve high decision quality offline but typically run in seconds to minutes per instance, falling short of the sub-second responses that peak periods of planning require. This paper develops a learning-based optimization proxy: a scenario-embedded neural network trained offline on solver-generated labels, paired online with a decoder that enforces feasibility, replacing the per-epoch solve with a single forward pass. The framework is specialized to omnichannel order fulfillment, where each arriving order requires a sub-second assignment of products to distribution centers and carrier services under stochastic delivery times and future demand. A two-stage contextual stochastic program is introduced to formulate this problem, and its contextual sample average approximation (C-SAA) supplies the offline labels, while a composite training loss combines label imitation, a constraint-violation penalty, and self-supervised cost alignment. In a calibrated simulator built from JD.com transactional records, a detailed computational study is provided. The proxy reduces decision latency by roughly $2{,}800\times$ relative to the online finite-sample C-SAA reference and improves over it by $3.3\%$ in realized fulfillment cost. Relative to established fulfillment policies, the proxy lowers total realized cost by at least $10.7\%$ and roughly halves the late-delivery rate. It improves both fulfillment cost and on-time delivery, the service dimension that drives customer satisfaction, at a runtime that stays suitable for peak workloads.

\medskip
\noindent\textbf{Keywords:} E-commerce Fulfillment; Contextual Stochastic
Optimization; Machine Learning for Combinatorial Optimization; Real-Time
Decision Making; Optimization Proxy.

\medskip
\noindent\textbf{Code:} \url{https://github.com/Joeyetinghan/contextual-fulfillment-proxy}.
\end{abstract}

\section{Introduction} \label{sec:intro}

Many real-time decision systems share a common abstraction: at each epoch the decision maker observes contextual covariates, commits to an action under uncertainty, and carries the consequences into future decisions. The same structure appears across many real-time operations. In real-time electricity dispatch, system operators clear security-constrained economic dispatch every five minutes to rebalance generation against fluctuating load and prices \citep{chen2024economic}. In ride-hail matching, platforms assign drivers to riders as requests arrive under demand and travel-time uncertainty \citep{ozkanWard2020dynamicmatching}. In dynamic pricing, sellers revise prices as demand and changing circumstances are observed over time, where digital technologies enabled continuously adjusting prices in many business contexts \citep{denBoer2015dynamicpricing}. These systems run at large scale under tight latency budgets. For instance, Uber recorded roughly $11.3$ billion trips in 2024 \citep{uber2024results}, and Alibaba reported a peak of $583{,}000$ orders created per second during its 2020 Singles' Day event \citep{alibaba2020singlesday}. When a high-quality decision cannot be produced in time, operators fall back on simpler heuristics that forgo the value of optimization, and at these volumes even small per-order losses accumulate rapidly.

Sequential contextual stochastic programs formalize this setting by representing the underlying uncertainty through a conditional distribution based on the covariate information. For the solution of these problems, a standard approach approximates this distribution by a finite scenario set and solves the resulting stochastic program under these scenarios at every epoch by utilizing an off-the-shelf optimization solver. While this delivers high decision quality offline, the scenario-based reformulation typically runs in seconds to minutes per instance depending on its size, falling short of the sub-second responses that peak periods demand, which require the development of novel computationally efficient solution techniques.

Omnichannel order fulfillment in large e-commerce networks is a representative problem setting where the fast response times to satisfy the order assignments are critical for achieving effective operations of these systems \citep{acimovic2019fulfillment}. Specifically, each arriving order must be assigned to a distribution center and carrier service that together balance shipping cost, delivery-time risk, multi-unit consolidation discounts, and the opportunity cost of consuming inventory needed by future orders. Leveraging context information which can include factors such as order characteristics, state of the distribution and carrier network, weather information and current demand, can be critical in this regard to represent the underlying uncertainties such as delivery times accurately. To this end, although contextual stochastic optimization (CSO) has lowered the realized cost of these decisions by integrating distributional forecasts of delivery times \citep{ye2025contextual}, extending CSO to sequential online environments remains computationally challenging \citep{sadana2025survey}.

Concretely, in the JD.com transactional setting used in this paper for the order fulfillment problem \citep{shen2024jd}, peak hours of a typical day see hundreds of order arrivals per minute, with even higher rates during holiday seasons, while the contextual sample average approximation (C-SAA) reformulation \citep{verweij2003sample} typically takes seconds to minutes per order to solve to a tight optimality gap \citep{ye2025contextual}. Solver-based C-SAA is therefore unavailable when peak periods demand sub-second responses, and operational systems fall back on heuristics that ignore the modeled uncertainty.

This paper develops a learning-based optimization proxy for sequential contextual stochastic programs and specializes it to omnichannel order fulfillment. The proxy is a scenario-embedded hierarchical neural network trained offline on solver-based labels, paired at deployment with a dedicated decoder that returns a feasible plan; together they replace the per-epoch solve with a single forward pass. Unlike prior optimization proxies that do not explicitly condition on the contextual uncertainty distribution \citep{yuan2022reinforcement, qi2023practical, yilmaz2024non, ojha2025outbound, vanhentenryck2025optimization}, the proposed architecture is explicitly scenario-aware, capturing both immediate stochastic costs and the sequential impact of resource consumption.

The contributions of this paper are four-fold:
\begin{enumerate}[leftmargin=*,topsep=0pt,itemsep=0pt,parsep=0pt,partopsep=0pt]
\item {\em A scenario-conditioned optimization proxy for sequential contextual stochastic programs}: This model maps contextual features and scenarios to decisions in a single forward pass, ensuring feasibility via a lightweight decoder and utilizing a multi-term loss function that integrates solver imitation, constraint penalties, and self-supervised cost-alignment.
\item {\em A two-stage contextual stochastic model for sequential omnichannel fulfillment}: This formulation explicitly ties each arriving order's assignment to the inventory available for future orders. To address this problem, a tailored proxy is introduced, featuring a hierarchical neural network over distribution centers and carrier services that generates scenario-conditioned scores. An inventory-weighted decoder then converts these scores into a feasible plan in a single pass.
\item {\em Theoretical guarantees for the fulfillment proxy}: The proposed inventory-weighted decoder is shown to yield a feasible decision in time log-linear in the size of the fulfillment network (the number of distribution centers and carrier services) per product line. Furthermore, its expected cost upper-bounds the optimal two-stage objective, and its out-of-sample value admits unbiased, consistent sample-mean and sample-variance estimators.
\item {\em Empirical evaluation via an industry-calibrated simulator}: Using a simulation framework built from JD.com transactional records and proprietary-calibrated carrier services, the operational impact of the proxy is demonstrated. Compared to an online solver-based baseline utilizing the same fitted forecaster, the proposed approach reduces per-order decision latency by approximately $2{,}800\times$ while lowering realized fulfillment costs by $3.3\%$. Beyond computational efficiency and better out-of-sample performance, the proxy improves operational outcomes. Relative to established fulfillment policies, it lowers total realized fulfillment cost by at least $10.7\%$ and roughly halves the late-delivery rate of the strongest classical baselines. Fewer late deliveries raise on-time service, the dimension that drives customer satisfaction.
\end{enumerate}

\noindent
The remainder of the paper is organized as follows. Section~\ref{sec:lit_review} reviews related work. Section~\ref{sec:problem_formulation} formalizes the two-stage contextual stochastic model and its C-SAA reformulation. Section~\ref{sec:proxy-framework} develops the scenario-embedded proxy and instantiates it for omnichannel fulfillment. Section~\ref{sec:computational_study} reports the computational study on JD.com data. Section~\ref{sec:conclusion} concludes the paper and discusses future directions.

\section{Literature Review} \label{sec:lit_review}
This section surveys three relevant domains which this paper provides contributions: contextual stochastic optimization, machine learning for stochastic and combinatorial optimization, and dynamic e-commerce fulfillment.

\subsection{Contextual Stochastic Optimization}
Classical stochastic programming typically represents uncertainty with an empirical distribution estimated from historical data, ignoring any auxiliary information that might be available.
In contrast, contextual stochastic optimization (CSO) uses concurrent auxiliary information, or covariates, to construct conditional distributions and thereby produce context-dependent
decisions \citep{sadana2025survey}. 
CSO methodologies fall broadly into three streams: sequential
predict-and-optimize, integrated learning and optimization, and direct
decision-rule learning. Sequential predict-and-optimize methods train a
predictive model and pass its output to a downstream solver \citep{bertsimas2020predictive}. Integrated approaches, or
decision-focused learning, instead train predictive models directly with
respect to downstream decision quality \citep{donti2017task,wilder2019melding}.
However, in settings with non-linear and asymmetric costs, such as e-commerce
fulfillment with severe late-delivery penalties, relying only on point
forecasts may still be insufficient.

To address this limitation, recent CSO methods increasingly use conditional
scenario generation to represent predictive uncertainty
\citep{kannan2025data,jia2025scenario,qi2025integrated}. In omnichannel fulfillment, \citet{ye2025contextual} established the benefit of distribution-aware C-SAA relative to point-forecast and empirical-SAA baselines when delivery uncertainty is modeled explicitly. The present paper extends this from a static offline setting to an online two-stage contextual
stochastic program that also captures the future opportunity cost of inventory
consumption. To satisfy tight latency requirements, the proposed framework adopts direct decision-rule learning, training a parameterized policy that maps contextual states directly to fulfillment decisions. While related to sequential control, direct reinforcement learning is less attractive here due to hard operational constraints and the availability of offline optimization labels.

\subsection{Machine Learning for Stochastic and Combinatorial Optimization}
The intersection of machine learning and operations research has yielded diverse methodologies for combinatorial and stochastic optimization, ranging from accelerating exact heuristics \citep{gasse2019exact} to differentiable categorical relaxations \citep{jang2017categorical}. In multi-stage stochastic programming, learning-based proxies address intractability primarily through direct solution prediction, value function approximation, and optimization-integrated learning \citep{rosemberg2025efficiently}. For instance, Neur2SP \citep{patel2022neur2sp} approximates the expected second-stage value function by pre-training a neural network that is subsequently encoded as fixed constraints in a downstream mixed-integer program. \citet{chan2025machine} avoids pre-training by embedding the second-stage cost approximation directly into the optimization formulation alongside the first-stage decisions, yielding new theoretical optimality bounds.

A directly relevant thread is constrained optimization learning \citep{vanhentenryck2025optimization}, which imitates complex optimization oracles for sub-second inference in autonomous fleet control and vehicle relocation \citep{yuan2022reinforcement, jungel2025learning}, recurrent dual-sourcing inventory control \citep{bottcher2023control}, and end-to-end load planning \citep{ojha2025outbound}. However, these works primarily study deterministic settings or rely on point-forecast approximations, leaving scenario-aware policies for sequential contextual stochastic programs largely unexplored. The proposed proxy differs from these threads on three axes. First, a permutation-invariant DeepSets-style encoder consumes a sampled scenario set as a first-class input rather than a point forecast. Furthermore, solver-label imitation is augmented by a constraint-violation penalty and a self-supervised cost-alignment term, rather than the decision-focused gradients of the respective learning setting \citep{donti2017task, wilder2019melding}. Additionally, decisions are first-stage and per-epoch in a sequential inventory-coupled setting, rather than the static second-stage value-function approximations of \citet{patel2022neur2sp} and \citet{chan2025machine}.

\subsection{Dynamic E-Commerce Fulfillment}
Omnichannel fulfillment requires assigning orders to facilities and carrier services to balance costs, consolidation, and service levels \citep{acimovic2019fulfillment, vasconcelos2026order}. While rolling-horizon policies and dynamic inventory heuristics improve myopic decisions by addressing future demand uncertainty and capacity limits
\citep{acimovic2015making,jasin2015lp,jia2026enabling}, they typically assume deterministic delivery times. Conversely, recent data-driven studies emphasize timeliness by explicitly modeling delivery uncertainty
\citep{salari2022real,ye2025contextual,ye2025conformal, faulkner2026uncertainty}. Because existing models have yet to integrate both uncertainty streams simultaneously, formulating a sequential model that jointly captures future demand and delivery uncertainties constitutes a primary contribution of this paper. Beyond these modeling contributions, the paper develops a learning-based solution approach for this problem. A scenario-aware optimization proxy returns fulfillment decisions in a single forward pass and runs several orders of magnitude faster than solver-based stochastic optimization. Existing order-fulfillment methods do not develop such a proxy, so the need for fast, scenario-aware solutions in this setting remains as an open challenge.

\section{Problem Formulation} \label{sec:problem_formulation}

Many e-commerce firms face an online order-fulfillment problem in their daily operations. Customers place orders continuously throughout the day, and each order must be served quickly under tight delivery promises. An order consists of one or more products, each requested in some quantity, together with the customer's destination and order attributes such as product categories, brands, order type, and applicable discounts. When an order arrives, the firm observes its contents, the on-hand inventory at each distribution center, the available carrier services and their shipping costs, and contextual signals predictive of delivery times and future demand. The firm must then immediately decide, for each product, which distribution center fulfills it and which carrier service ships it. Because these decisions consume inventory that later orders also draw on, each assignment shapes the cost and feasibility of fulfilling future orders. The remainder of this section formalizes this setting.

The online order fulfillment is formulated over a planning horizon $\mathcal{T} = \{1, 2, \ldots, T\}$, where each epoch $t \in \mathcal{T}$ corresponds to the arrival of a customer order. The fulfillment network consists of a set of locations/distribution centers (DCs) $\mathcal{J}$, a set of products $\mathcal{I}$, and a set of carrier-services $\mathcal{K}$. Each order requests quantities $\mathbf{q}^t = (q^t_i)_{i \in \mathcal{I}}$. The current on-hand inventory levels are denoted by $\mathrm{Inv}^t_{i,j}$ for each product $i \in \mathcal{I}$ at each location $j \in \mathcal{J}$. The core task is to create a dynamic fulfillment plan for each arriving order, assigning products to locations and carrier-services to minimize total expected costs while respecting operational constraints (see App.~\ref{app:nomen} for a full nomenclature).

For each order, the primary operational decision is the fulfillment plan $\mathbf{z}^{t} = (z^t_{i,j,k})_{i \in \mathcal{I},\, j \in \mathcal{J},\, k \in \mathcal{K}}$, where $z^t_{i,j,k}$ is the integer quantity of product $i$ sourced from location $j$ and shipped using carrier-service $k$. An auxiliary slack variable $\mathbf{u}^t = (u^t_i)_{i \in \mathcal{I}}$ represents unfulfilled demand for each product, ensuring feasibility when the requested quantity exceeds globally available inventory. The joint decision at epoch $t$ then belongs to the feasible set $\mathcal{Z}^t$ defined by fulfillment and inventory restrictions: 
\[
\mathcal{Z}^t
=
\left\{
(\mathbf{z}^t, \mathbf{u}^t) \in \mathbb{Z}^{|\mathcal{I}||\mathcal{J}||\mathcal{K}|}_{+} \times \mathbb{Z}^{|\mathcal{I}|}_{+} :
\sum_{j \in \mathcal{J}} \sum_{k \in \mathcal{K}} z^t_{i,j,k} + u^t_i = q^t_i, \ \forall i \in \mathcal{I}; \;
\sum_{k \in \mathcal{K}} z^t_{i,j,k} \le \mathrm{Inv}^t_{i,j}, \ \forall i \in \mathcal{I},\, j \in \mathcal{J}
\right\}.
\]

The decision-making process is subject to two primary sources of uncertainty. The first is delivery timeliness. Let $\tilde{\boldsymbol{\Delta}}^t = (\tilde{\Delta}^t_{j,k})_{j \in \mathcal{J},\, k \in \mathcal{K}}$ be a random vector, where $\tilde{\Delta}^t_{j,k}$ represents the deviation in days from the promised delivery date when an item is sourced from location $j$ and shipped using carrier-service $k$. A positive realization $\Delta^t_{j,k}$ indicates a late delivery, while a negative value indicates an early one. The realization of $\Delta^t_{j,k}$ for the chosen location-carrier pairs is observed only after the decision $\mathbf{z}^t$ is executed, analogous to a partial feedback setting \citep{ye2025contextual}. The second source of uncertainty is future demand, represented by a random vector $\tilde{\mathbf{D}}^t = (\tilde{D}^t_i)_{i \in \mathcal{I}}$ denoting the cumulative demand for each product from $t+1$ to $T$.

The per-epoch cost function $C(\mathbf{z}^t, \mathbf{u}^t, \tilde{\boldsymbol{\Delta}}^t)$ combines deterministic shipping costs, penalties for stochastic delivery deviations, and a unit stockout penalty $\rho$ applied to unfulfilled demand. To encourage consolidation, a discount $\beta \in (0,1)$ is applied to the shipping cost $c_{i,j,k}^{\mathrm{ship}}$ for any location-carrier pair $(j,k)$ that sources two or more units for the order. Asymmetric penalties, $\gamma^+ \gg \gamma^- > 0$, are applied for late ($\tilde{\Delta}_{j,k}^{\,t+} = \max\{0, \tilde{\Delta}^t_{j,k}\}$) and early ($\tilde{\Delta}_{j,k}^{\,t-} = \max\{0, -\tilde{\Delta}^t_{j,k}\}$) deliveries. The cost 
$
C(\mathbf{z}^t, \mathbf{u}^t, \tilde{\boldsymbol{\Delta}}^t)
$
is given by
\[
\sum_{j \in \mathcal{J}} \sum_{k \in \mathcal{K}}
\left[
\left(
\sum_{i \in \mathcal{I}} c_{i,j,k}^{\mathrm{ship}} z^t_{i,j,k}
\right)
\left(
1 - \beta \, \mathbf{1}\Bigl\{\sum_{i \in \mathcal{I}} z^t_{i,j,k} \ge 2\Bigr\}
\right)
+
\left( \gamma^+ \tilde{\Delta}_{j,k}^{\,t+} + \gamma^- \tilde{\Delta}_{j,k}^{\,t-} \right)
\sum_{i \in \mathcal{I}} z^t_{i,j,k}
\right]
+ \rho \sum_{i \in \mathcal{I}} u^t_i.
\]
As this cost function induces non-linear terms, in the computational implementation, the non-linear indicator function governing the consolidation discount is linearized using standard big-$M$ constraints and auxiliary binary variables.

At the start of epoch $t$, a contextual vector $\mathbf{X}^t \in \mathcal{X}$ is observed, containing features of the order, customer, and current network state. Because fulfillment decisions consume inventory and thereby affect future feasible actions, the problem is inherently sequential. The system state therefore evolves over time as inventory is depleted and new exogenous information arrives.

\subsection{Two-Stage Contextual Stochastic Model} \label{sec:2stg}
The sequential nature of online fulfillment gives rise to a multi-stage stochastic program that is typically intractable. A rolling-horizon framework with stage aggregation is therefore employed. At each epoch $t$, a two-stage contextual stochastic program is solved for the immediate decision $(\mathbf{z}^t, \mathbf{u}^t)$, collapsing future epochs $t+1, \dots, T$ into a single second-stage problem \citep{powell2022designing}. The first-stage decision is committed before any current-period or future uncertainty is realized. The second-stage variables $\mathbf{v}^t = (v^t_{i,j,k})_{i, j, k}$ and $\mathbf{w}^t = (w^t_i)_{i \in \mathcal{I}}$ represent the aggregated fulfillment plan and slack stockout for orders from $t+1$ to $T$. The problem solved at each epoch is
\begin{equation} \label{eq:full}
\min_{\;(\mathbf{z}^t, \mathbf{u}^t) \in \mathcal{Z}^t}
\;
\mathbb{E}_{\tilde{\boldsymbol{\Delta}}^t,\, \tilde{\mathbf{D}}^t \sim
\mathbb{P}(\cdot \mid \mathbf{X}^t)}
\Big[
C\!\big(\mathbf{z}^t, \mathbf{u}^t, \tilde{\boldsymbol{\Delta}}^t\big)
+
Q\!\big(\mathbf{z}^t, \tilde{\mathbf{D}}^t\big)
\Big],
\end{equation}
where $\mathbb{P}(\cdot \mid \mathbf{X}^t)$ represents the conditional distribution given the context $\mathbf{X}^t$ and $Q(\mathbf{z}^t, \tilde{\mathbf{D}}^t)$ is the second-stage cost function representing the expected future shipping and stockout costs:
\[
Q(\mathbf{z}^t, \tilde{\mathbf{D}}^t)
=
\min_{\mathbf{v}^{t} \ge 0,\, \mathbf{w}^t \ge 0}
\left\{
\sum_{i \in \mathcal{I}} \sum_{j \in \mathcal{J}} \sum_{k \in \mathcal{K}}
c^{\mathrm{ship}}_{i,j,k} \, v^t_{i,j,k} + \rho \sum_{i \in \mathcal{I}} w^t_i
\;\middle|\;
\begin{aligned}
&\sum_{j \in \mathcal{J}} \sum_{k \in \mathcal{K}} v^t_{i,j,k} + w^t_i
= \tilde{D}^t_i, && \forall i \in \mathcal{I}, \\
&\sum_{k \in \mathcal{K}} \bigl(z^t_{i,j,k} + v^t_{i,j,k}\bigr)
\le \mathrm{Inv}^t_{i,j}, && \forall i \in \mathcal{I},\, j \in \mathcal{J}.
\end{aligned}
\right\}.
\]
The first-stage decision $\mathbf{z}^t$ enters the second stage through the post-allocation inventory state, which constrains the feasible recourse and thereby determines $Q$. The second-stage variable $\mathbf{v}^t$ is declared continuous: with integer right-hand sides $\mathbf{Inv}^t$ and $\tilde{\mathbf{D}}^t$, the transportation-network constraint matrix is totally unimodular and an integer optimum is recovered without imposing integrality. The second-stage cost includes only base shipping and stockout costs on the forecast future demand; it omits future delivery-time penalties and consequently considers an approximation of the true future cost, since the aggregate delivery-time deviation across the remaining horizon cannot be forecast at decision time.

\subsection{Contextual Sample Average Approximation (C-SAA)} \label{sec:csaa}
To solve the stochastic program \eqref{eq:full}, the conditional distribution $\mathbb{P}(\cdot \mid \mathbf{X}^t)$ is approximated by a finite sampled scenario set $\tilde{\Omega}^t = \{\omega_1, \dots, \omega_E\}$ given the context $\mathbf{X}^t$, with each scenario $\omega$ providing a delivery-time realization $\boldsymbol{\Delta}^t_{\omega} = (\Delta^t_{j,k,\omega})_{j,k}$ and per-product cumulative demands $D^t_{i,\omega}$. The details of the scenario generation process and integration of the context information are discussed in Section \ref{sec:scenario_generation}. By introducing per-scenario second-stage variables $\mathbf{v}^t_{\omega} = (v^t_{i,j,k,\omega})$ and $\mathbf{w}^t_{\omega} = (w^t_{i,\omega})$, the C-SAA extensive form can be written as follows \citep{verweij2003sample, ye2025contextual}:
\begin{equation}
\label{eq:csaa_extensive}
\begin{aligned}
\min_{\substack{(\mathbf{z}^t, \mathbf{u}^t) \in \mathcal{Z}^t \\ \mathbf{v}^t_{\omega} \ge 0,\; \mathbf{w}^t_{\omega} \ge 0}} \quad 
& \frac{1}{E} \sum_{\omega \in \tilde{\Omega}^t} 
\Bigg(
C(\mathbf{z}^t, \mathbf{u}^t, \boldsymbol{\Delta}^t_{\omega})
+ \sum_{i \in \mathcal{I}} \sum_{j \in \mathcal{J}} \sum_{k \in \mathcal{K}} c^{\mathrm{ship}}_{i,j,k} \, v^t_{i,j,k,\omega}
+ \rho \sum_{i \in \mathcal{I}} w^t_{i,\omega}
\Bigg) \\
\text{s.t.} \quad 
& \sum_{j \in \mathcal{J}} \sum_{k \in \mathcal{K}} v^t_{i,j,k,\omega} + w^t_{i,\omega} = D^t_{i,\omega},
&& \forall i \in \mathcal{I},\, \omega \in \tilde{\Omega}^t, \\
& \sum_{k \in \mathcal{K}} \bigl(z^t_{i,j,k} + v^t_{i,j,k,\omega}\bigr) \le \mathrm{Inv}^{t}_{i,j},
&& \forall i \in \mathcal{I},\, j \in \mathcal{J},\, \omega \in \tilde{\Omega}^t .
\end{aligned}
\end{equation}

As a single finite-sample C-SAA instance is sensitive to scenario sampling variability, an iterative evaluation procedure is utilized to generate multiple candidate solutions under different scenario sets. The proposed procedure follows the standard multiple-replication strategy of the SAA algorithms \citep{verweij2003sample}, where $S$ candidate first-stage decisions are generated by solving independent instances of \eqref{eq:csaa_extensive} with scenario size $N_1$, then these solutions are evaluated on a larger common scenario set of size $N_2 \gg N_1$ to estimate their out-of-sample performances to select the best candidate solution. Details of this procedure is provided in Appendix~\ref{app:csaa_procedure}.

\section{A Scenario-Embedded Optimization Proxy for Sequential Contextual Stochastic Program}
\label{sec:proxy-framework}

Solving a solver-based scenario approximation of the two-stage contextual stochastic program could still be computationally prohibitive in real time during peak operating periods. This motivates the low-latency optimization proxy pipeline developed in this section to address these operational challenges. Section~\ref{sec:general_framework} states this design as a general proxy framework for sequential contextual stochastic programs. Section~\ref{sec:fulfillment_instantiation} instantiates the framework for the fulfillment setting of Section~\ref{sec:problem_formulation}.

\subsection{General Proxy Framework} \label{sec:general_framework}

Consider a sequential contextual stochastic program in which, at each epoch $t$, the decision maker observes a state or context $\mathbf{X}^t$, samples a scenario set $\mathbf{\Omega}^t$ from a fitted conditional forecaster, and chooses a first-stage decision in a feasible set $\mathcal{Z}^t$ before the next epoch. The computational bottleneck arises when a solver-based approximation, such as SAA or another scenario-based reformulation, can produce high-quality first-stage decisions offline but is too slow to run at every online epoch. The proxy framework replaces this online solve with a learned decision rule trained on offline solver labels.

The framework has four ingredients. First, an offline oracle maps historical contexts and scenario sets to first-stage action labels $\mathbf{a}^{t,*}$. Second, a scenario-embedded model $f_\theta(\mathbf{X}^t,\mathbf{\Omega}^t)$ maps the same information to decision scores or logits. Third, a decoder $D$ maps these scores into a feasible decision in $\mathcal{Z}^t$. Fourth, training combines label imitation with problem-aware regularization, such as a soft feasibility penalty and a scenario-based cost-alignment term. Online, the solver call is replaced by one forward pass through $f_\theta$ followed by the decoder $D$.

This framework is intended for latency-critical settings where high-quality offline labels are available and feasibility can be enforced by a decoder, projection, or efficient repair rule. The quality of the solutions obtained through this approach depends on the offline label generator, the fitted scenario model, and the expressiveness of the proxy class, which can be specialized depending on the problem setting considered. To this end, the fulfillment application below is one instantiation of the proposed proxy framework, where the offline oracle is finite-sample C-SAA, the generic action label $\mathbf{a}^{t,*}$ corresponds to the fulfillment decision $(\mathbf{z}^{t,*},\mathbf{u}^{t,*})$, the decoder is inventory-weighted feasible inference, and the scenario set contains demand and delivery-time samples.

\subsection{Fulfillment Instantiation} \label{sec:fulfillment_instantiation}

The remainder of this section instantiates the framework for omnichannel fulfillment. Figure~\ref{fig:fulfillment_proxy_overview} schematizes the pipeline: demand and delivery-time forecasters feed finite-sample C-SAA on historical orders offline, and the trained proxy serves each arriving order online.

\begin{figure}[!ht]
    \centering
    \includegraphics[width=0.82\linewidth]{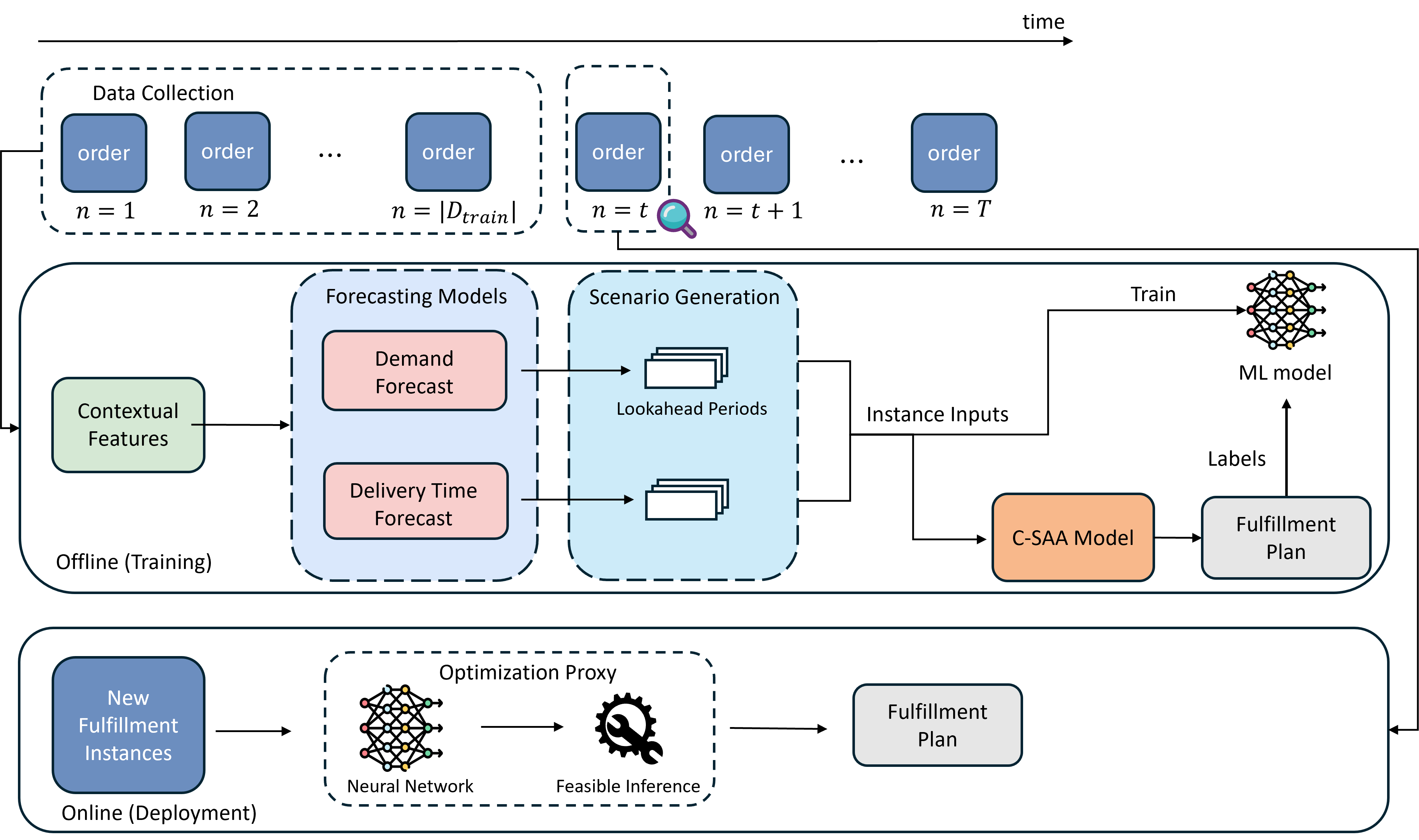}
    \caption{Schematic of the optimization proxy for omnichannel order fulfillment.}
    \label{fig:fulfillment_proxy_overview}
\end{figure}

\subsubsection{Contextual Inputs and Scenario Generation} \label{sec:scenario_generation}
The fulfillment context decomposes as
\(
\mathbf{X}^t = \bigl(\mathbf{X}_{\mathrm{ord}}^t,\; \{\mathbf{X}_{\mathrm{dc},j}^t\}_{j \in \mathcal{J}},\; \{\mathbf{X}_{\mathrm{cost},j,k}^t\}_{j \in \mathcal{J}, k \in \mathcal{K}}\bigr).
\)
The order-context vector $\mathbf{X}_{\mathrm{ord}}^t$ concatenates SKU and brand categorical embeddings with numeric order fields (number of SKUs in the order, total quantity, order type, bundle and coupon discount flags, gift-item flag, average discount rate), calendar features (e.g.\ hour and day of week), and customer demographics; the per-DC vector $\mathbf{X}_{\mathrm{dc},j}^t$ collects the on-hand inventory $\mathrm{Inv}_{j}^t$ and a DC categorical embedding for each candidate $j \in \mathcal{J}$; and the per-(DC, carrier) vector $\mathbf{X}_{\mathrm{cost},j,k}^t$ collects the base shipping cost $c^{\mathrm{ship}}_{j,k}$ and a carrier categorical embedding for each option $(j,k)$.

Two fitted forecasters approximate the conditional distribution $\mathbb{P}(\cdot \mid \mathbf{X}^t)$ through predicted quantiles, generating a paired scenario set $\mathbf{\Omega}^t = (\mathbf{\Omega}^t_{\mathrm{dem}},\,\mathbf{\Omega}^t_{\mathrm{time}})$ of $E$ Monte Carlo scenarios per random vector by inverse-transform sampling: for each $\omega \in \{1, \ldots, E\}$, a uniform draw $u_\omega \sim U(0,1)$ is mapped through the linearly interpolated predicted CDF to a realization. A multi-horizon quantile recurrent network (MQRNN) \citep{wen2017multi} consumes order-context features and historical demand to produce non-crossing per-period demand quantiles, which are inverse-sampled per period and summed across the horizon to yield $\mathbf{\Omega}^t_{\mathrm{dem}} = (\tilde{\Omega}^t_{i, \mathrm{dem}})_{i \in \mathcal{I}}$ with $\tilde{\Omega}^t_{i, \mathrm{dem}} = \{D_{i,1}^t, \ldots, D_{i,E}^t\}$. A multi-quantile multilayer perceptron (MQMLP) consumes order-context and per-(DC, carrier) cost features to produce delivery-time quantiles, sampled the same way to yield $\mathbf{\Omega}^t_{\mathrm{time}} = (\tilde{\Omega}^t_{j,k, \mathrm{time}})_{j \in \mathcal{J}, k \in \mathcal{K}}$ with $\tilde{\Omega}^t_{j,k, \mathrm{time}} = \{\Delta_{j,k,1}^{\,t}, \ldots, \Delta_{j,k,E}^{\,t}\}$. The online supplement reports the full feature lists, network architectures, training objectives, and held-out forecasting performance.

\subsubsection{Scenario-Embedded Hierarchical Proxy}
\label{sec:opt-proxy}

Because the constraints in $\mathcal{Z}^t$ are separable across products, a single shared network $f_\theta$ processes one product line at a time and the order-level decision concatenates per-product outputs. The consolidation discount couples the multiple product lines of the same order that ship together but is not separable; the per-product decomposition therefore sees this coupling only indirectly through the jointly optimized C-SAA labels. Each training instance $n \in \mathcal{N}$ is one historical (order, product-line) record, paired with its contextual features, sampled scenarios, and C-SAA label.

\paragraph{Architecture Walkthrough.} \label{sec:proxy_walkthrough}

Figure~\ref{fig:proxy_arch} schematizes the proxy. The upper panel shows the architecture: four input modules (global, scenario, DC, cost) produce embeddings that are concatenated per DC and passed to a DC head and a DC-conditioned carrier head, which together return $(\bm{\ell}^{\mathrm{dc}}, \bm{\ell}^{\mathrm{carrier}})$. The lower panels summarize the composite training loss applied at training time and the inventory-weighted decoder applied at inference (developed in Sections~\ref{sec:proxy_loss} and~\ref{sec:feasible_inference}). Figure~\ref{fig:proxy_forward} formalizes the forward pass of the optimization proxy. Each module $\phi$ and head $g$ is a multilayer perceptron (MLP) whose hidden layers apply a fully connected linear projection, batch normalization, dropout regularization, and a ReLU activation in sequence; hidden widths and layer counts are tuned and reported in Appendix~\ref{app:nn}.

\begin{figure}[!ht]
    \centering
    \includegraphics[width=0.88\linewidth]{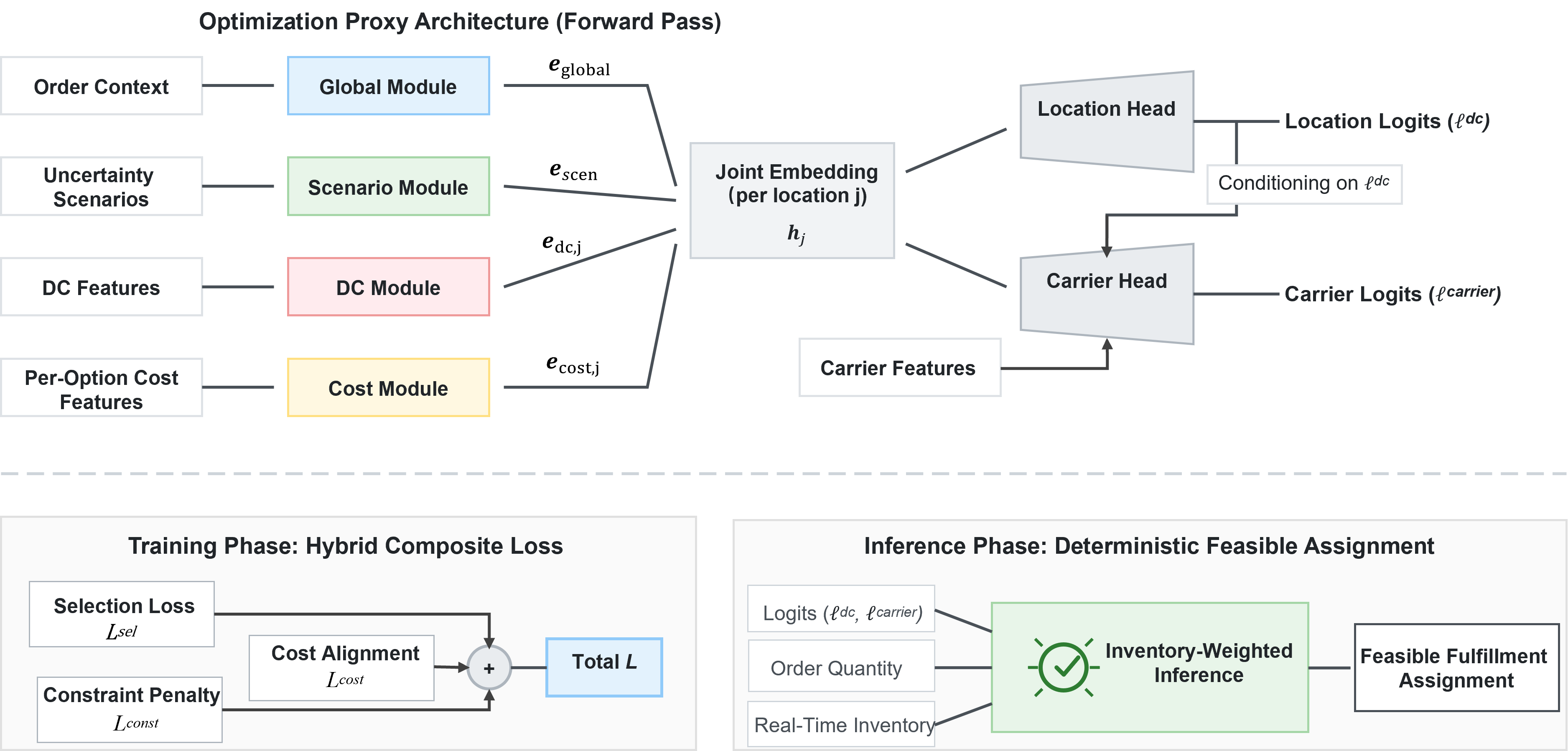}
    \caption{Schematic of optimization proxy architecture, training loss, and inference. 
    }
    \label{fig:proxy_arch}
\end{figure}

\begin{figure}[!ht]
\centering
\fbox{\parbox{\linewidth}{
$f_\theta\!\left(\mathbf{X}_{\mathrm{ord}},\; \{\mathbf{X}_{\mathrm{dc},j}\}_{j \in \mathcal{J}},\; \{\mathbf{X}_{\mathrm{cost},j,k}\}_{j \in \mathcal{J},\, k \in \mathcal{K}},\; \mathbf{\Omega}_{\mathrm{dem}},\; \mathbf{\Omega}_{\mathrm{time}}\right)$:
\begin{align*}
\mathbf{e}_{\mathrm{global}} &= \phi_{\mathrm{ord}}(\mathbf{X}_{\mathrm{ord}}) \\
\mathbf{m}^{\mathrm{dem}} &= \tfrac{1}{E}\textstyle\sum_{D_\omega \in \mathbf{\Omega}_{\mathrm{dem}}} \phi_{\mathrm{scen}}^{\mathrm{dem}}(D_\omega) \\
\mathbf{m}^{\mathrm{time}} &= \tfrac{1}{E}\textstyle\sum_{\boldsymbol{\Delta}_\omega \in \mathbf{\Omega}_{\mathrm{time}}} \phi_{\mathrm{scen}}^{\mathrm{time}}\!\left(\operatorname{vec}(\boldsymbol{\Delta}_\omega)\right) \\
\mathbf{e}_{\mathrm{scen}} &= \mathbf{m}^{\mathrm{dem}} + \mathbf{m}^{\mathrm{time}} \\
\mathbf{e}_{\mathrm{dc},j} &= \phi_{\mathrm{dc}}(\mathbf{X}_{\mathrm{dc},j}) & j \in \mathcal{J} \\
\mathbf{e}_{\mathrm{cost},j} &= \phi_{\mathrm{cost}}\!\left(\mathrm{stats}\!\left(\{c^{\mathrm{ship}}_{j,k}\}_{k \in \mathcal{K}},\; \{\Delta_{j,k,\omega}\}_{k \in \mathcal{K},\, \omega \in [E]}\right)\right) & j \in \mathcal{J} \\
\mathbf{h}_j &= [\mathbf{e}_{\mathrm{global}} \parallel \mathbf{e}_{\mathrm{scen}} \parallel \mathbf{e}_{\mathrm{dc},j} \parallel \mathbf{e}_{\mathrm{cost},j}] & j \in \mathcal{J} \\
\ell^{\mathrm{dc}}_j &= g_{\mathrm{dc}}(\mathbf{h}_j) & j \in \mathcal{J} \\
\ell^{\mathrm{carrier}}_{j,k} &= g_{\mathrm{carrier}}\!\left(\mathbf{h}_j,\; \ell^{\mathrm{dc}}_j,\; \mathbf{X}_{\mathrm{cost},j,k}\right) & j \in \mathcal{J},\, k \in \mathcal{K} \\
\text{return } & \bigl(\bm{\ell}^{\mathrm{dc}},\; \bm{\ell}^{\mathrm{carrier}}\bigr).
\end{align*}
}}
\caption{Forward pass of the optimization proxy $f_\theta$.}
\label{fig:proxy_forward}
\end{figure}

\paragraph{Global and DC modules.} A shared MLP $\phi_{\mathrm{ord}}$ encodes the order context $\mathbf{X}_{\mathrm{ord}}$ into a global embedding; a second shared MLP $\phi_{\mathrm{dc}}$ encodes each per-DC feature vector $\mathbf{X}_{\mathrm{dc},j}$ into a per-DC embedding, parameter-shared across candidates $j$ so the embedding size is independent of $|\mathcal{J}|$:
\begin{align*}
\mathbf{e}_{\mathrm{global}} &= \phi_{\mathrm{ord}}(\mathbf{X}_{\mathrm{ord}}), \quad
\mathbf{e}_{\mathrm{dc},j} = \phi_{\mathrm{dc}}(\mathbf{X}_{\mathrm{dc},j}), \qquad j \in \mathcal{J}.
\end{align*}

\paragraph{Scenario module.} The sampled scenario sets $\mathbf{\Omega}_{\mathrm{dem}}$ and $\mathbf{\Omega}_{\mathrm{time}}$ are unordered and may vary in size at deployment. For the current product line, identify $\mathbf{\Omega}_{\mathrm{dem}} = \{D_\omega\}_{\omega=1}^E$ as the scalar demand scenarios and $\mathbf{\Omega}_{\mathrm{time}} = \{\boldsymbol{\Delta}_\omega\}_{\omega=1}^E$ as the deviation matrices $\boldsymbol{\Delta}_\omega = (\Delta_{j,k,\omega})_{j \in \mathcal{J}, k \in \mathcal{K}}$ under a fixed $(j,k)$ ordering. Following the DeepSets template \citep{zaheer2017deep, patel2022neur2sp}, each scenario is encoded individually and the resulting embeddings are averaged. The encoder $\phi_{\mathrm{scen}}^{\mathrm{dem}}: \mathbb{R} \to \mathbb{R}^h$ (with $h$ the shared embedding dimension across the proxy's modules, a tuned hyperparameter reported in Appendix~\ref{app:nn}) maps $D_\omega$ to an embedding capturing where the scenario sits in the demand distribution. The encoder $\phi_{\mathrm{scen}}^{\mathrm{time}}: \mathbb{R}^{|\mathcal{J}||\mathcal{K}|} \to \mathbb{R}^h$ maps the flattened matrix $\operatorname{vec}(\boldsymbol{\Delta}_\omega)$ to an embedding capturing which DC--carrier options are jointly stressed in that sampled future. Averaging the per-scenario embeddings is the network's analogue of an expectation and matches the arithmetic-mean structure of the C-SAA objective:
\begin{align*}
\mathbf{m}^{\mathrm{dem}} &= \frac{1}{E}\sum_{D_\omega \in \mathbf{\Omega}_{\mathrm{dem}}} \phi_{\mathrm{scen}}^{\mathrm{dem}}(D_\omega), \quad
\mathbf{m}^{\mathrm{time}} = \frac{1}{E}\sum_{\boldsymbol{\Delta}_\omega \in \mathbf{\Omega}_{\mathrm{time}}} \phi_{\mathrm{scen}}^{\mathrm{time}}(\operatorname{vec}(\boldsymbol{\Delta}_\omega)), \\
\mathbf{e}_{\mathrm{scen}} &= \mathbf{m}^{\mathrm{dem}} + \mathbf{m}^{\mathrm{time}}.
\end{align*}
The two branches use independent weights at the same hidden width, so addition preserves disentangled demand and delivery-time signals without a learned fusion layer.

\paragraph{Cost module.} The candidate option set $\{(j,k)\}_{j,k}$ has $|\mathcal{J}|\,|\mathcal{K}|$ entries, so feeding all per-option features into the DC head directly would either inflate parameter count or require a second permutation-invariant pool over carriers. Instead, the proxy summarizes the per-DC cost and timeliness features through $\mathrm{stats}(\mathbf{c}_j, \boldsymbol{\Delta}_j) \in \mathbb{R}^8$, with $\mathbf{c}_j = (c^{\mathrm{ship}}_{j,k})_{k \in \mathcal{K}}$ and $\boldsymbol{\Delta}_j = (\Delta_{j,k,\omega})_{k \in \mathcal{K},\, \omega \in [E]}$. The first five outputs are carrier-axis statistics of $\mathbf{c}_j$ ($\overline{\mathbf{c}_j}, \min \mathbf{c}_j, \sigma(\mathbf{c}_j), p_{90}(\mathbf{c}_j), \mathrm{gap}_2(\mathbf{c}_j) = c^{(2)}_j - c^{(1)}_j$) corresponding to mean, minimum, standard deviation, 90th percentile, the gap between the two smallest entries, respectively, and the last three are (carrier, scenario)-axis statistics of $\boldsymbol{\Delta}_j$ ($\overline{\boldsymbol{\Delta}_j}, \sigma(\boldsymbol{\Delta}_j), p_{90}(\boldsymbol{\Delta}_j)$). The cost embedding is $\mathbf{e}_{\mathrm{cost},j} = \phi_{\mathrm{cost}}(\mathrm{stats}(\mathbf{c}_j, \boldsymbol{\Delta}_j))$. These dispersion statistics expose per-carrier variability to the DC head without committing to a specific carrier.

\paragraph{Joint embedding and hierarchical heads.} The four embeddings concatenate per DC, $\mathbf{h}_j = [\mathbf{e}_{\mathrm{global}} \parallel \mathbf{e}_{\mathrm{scen}} \parallel \mathbf{e}_{\mathrm{dc},j} \parallel \mathbf{e}_{\mathrm{cost},j}]$ for $j \in \mathcal{J}$. The DC head, MLP $g_{\mathrm{dc}}$, maps $\mathbf{h}_j$ to a scalar logit $\ell^{\mathrm{dc}}_j = g_{\mathrm{dc}}(\mathbf{h}_j)$. The carrier head $g_{\mathrm{carrier}}$ then conditions the carrier choice on the joint embedding, the raw DC logit, and the per-option features:
\begin{equation}
\label{eq:proxy_ell_carrier}
\ell^{\mathrm{carrier}}_{j,k} \;=\; g_{\mathrm{carrier}}\!\left(\mathbf{h}_j,\; \ell^{\mathrm{dc}}_j,\; \mathbf{X}_{\mathrm{cost},j,k}\right), \qquad j \in \mathcal{J},\, k \in \mathcal{K}.
\end{equation}
Two design choices matter in \eqref{eq:proxy_ell_carrier}. First, the carrier head receives the \emph{raw}, pre-softmax DC logit, so carrier selection can depend on the strength of the learned DC ranking rather than only on the relative ordering of DCs. Second, the two heads imply a hierarchical factorization $p(j) \cdot p(k \mid j)$. The DC-head softmax realizes $p(j)$ over distribution centers, and the carrier-head softmax realizes $p(k \mid j)$ over each DC's eligible carrier services. This factorization matches the structure of the fulfillment decision: an order line first selects a DC, then a carrier-service from that DC's eligible set. It reduces the complexity of learning the full joint $|\mathcal{J}|\,|\mathcal{K}|$ assignment.

\subsubsection{Composite Training Loss} \label{sec:proxy_loss}

Figure~\ref{fig:proxy_training} states the training optimization. It takes the $|\mathcal{N}|$ training instances as input and minimizes the average per-instance composite loss $\mathcal{L}^n$, which combines three terms targeting C-SAA-label imitation, operational feasibility, and self-supervised cost alignment.

The offline C-SAA oracle returns a quantity decision $(\mathbf{z}^{n,*},u^{n,*})$ for the product line, which may distribute the requested quantity across multiple $(j,k)$ options. The learning task is framed as predicting a single \emph{primary} option per product line rather than the full multi-DC split, which encourages shipping consolidation; if a shortage forces a split, the inventory-weighted decoder of Section~\ref{sec:feasible_inference} recovers it at inference. The primary location and carrier-service labels are extracted as the most-served option of the C-SAA decision:
\begin{equation*}
j^{n,*} \in \arg\max_{j\in\mathcal{J}}\sum_{k\in\mathcal{K}} z^{n,*}_{j,k},
\qquad
k^{n,*} \in \arg\max_{k\in\mathcal{K}_{j^{n,*}}^n} z^{n,*}_{j^{n,*},k},
\end{equation*}
with deterministic tie breaking.

\begin{figure}[!ht]
\centering
\fbox{\parbox{\linewidth}{
\begin{align*}
\theta^* \;\in\; \arg\min_{\theta} \quad & \frac{1}{|\mathcal{N}|}\sum_{n \in \mathcal{N}} \mathcal{L}^n(\theta) \\
\text{s.t.} \quad & \bigl(\bm{\ell}^{n,\mathrm{dc}},\; \bm{\ell}^{n,\mathrm{carrier}}\bigr) \;=\; f_\theta(\mathbf{X}^n,\mathbf{\Omega}^n) & \forall n \in \mathcal{N} \\
& \mathbf{p}^{n,\mathrm{dc}} \;=\; \mathrm{softmax}\!\bigl(\bm{\ell}^{n,\mathrm{dc}}\bigr) & \forall n \in \mathcal{N} \\
& \mathbf{p}^{n,\mathrm{carrier}} \;=\; \mathrm{softmax}_k\!\bigl(\bm{\ell}^{n,\mathrm{carrier}}\bigr) & \forall n \in \mathcal{N} \\
& \mathbf{a}^n \;=\; \mathrm{ST\text{-}GumbelSoftmax}(\bm{\ell}^{n,\mathrm{dc}},\, \tau) & \forall n \in \mathcal{N} \\
& \mathcal{L}^n(\theta) \;=\; \mathcal{L}^n_{\mathrm{sel}}\!\bigl(\bm{\ell}^{n,\mathrm{dc}},\, \bm{\ell}^{n,\mathrm{carrier}};\, j^{n,*},\, k^{n,*}\bigr) \\
& \qquad\qquad +\; \mathcal{L}^n_{\mathrm{const}}\!\bigl(\mathbf{a}^n;\, q^n,\, \mathbf{Inv}^n\bigr) \\
& \qquad\qquad +\; \mathcal{L}^n_{\mathrm{cost}}\!\bigl(\mathbf{p}^{n,\mathrm{dc}},\, \mathbf{p}^{n,\mathrm{carrier}};\, \mathbf{\Omega}^n_{\mathrm{cost}}\bigr).
\end{align*}
}}
\caption{Optimization model for training the proxy $f_\theta$.}
\label{fig:proxy_training}
\end{figure}

\paragraph{Selection Loss.} Let $\mathbf{p}^{n,\mathrm{dc}} = \mathrm{softmax}(\bm{\ell}^{n,\mathrm{dc}})$ and $\mathbf{p}^{n,\mathrm{carrier}} = \mathrm{softmax}_k(\bm{\ell}^{n,\mathrm{carrier}})$ denote the head probabilities, with the carrier softmax taken along the carrier-service axis at each fixed DC. The selection loss $\mathcal{L}_{\mathrm{sel}}$ supervises both heads against the primary labels $(j^{n,*}, k^{n,*})$ via the negative log-likelihood of the labeled pair under the hierarchical predicted distribution (equivalently, the categorical cross-entropy on the unnormalized logits), with separate weights on the DC and conditional-carrier terms:
\begin{equation*}
\mathcal{L}^n_{\mathrm{sel}} \;=\; -\lambda_{\mathrm{sel}} \Bigl(\log p^{n,\mathrm{dc}}_{j^{n,*}} \;+\; \lambda_{\mathrm{carrier}}\, \log p^{n,\mathrm{carrier}}_{j^{n,*},\, k^{n,*}}\Bigr).
\end{equation*}
Here $\lambda_{\mathrm{sel}}$ scales the overall selection weight and $\lambda_{\mathrm{carrier}}$ scales the relative importance of the conditional carrier signal.

\paragraph{Constraint Loss.} The one-hot DC assignment $\mathbf{a}^n \in \{0,1\}^{|\mathcal{J}|}$ is drawn from the DC logits via the Straight-Through Gumbel-Softmax (ST-GumbelSoftmax) estimator of \citet{jang2017categorical}: $\mathbf{a}^n$ is the one-hot encoding of $\arg\max_j (\ell^{n,\mathrm{dc}}_j + g_j)$ with independent and identically distributed (i.i.d.) standard Gumbel noise $g_j$, but its gradient is computed as if $\mathbf{a}^n$ equaled the temperature-$\tau$ softmax $\mathrm{softmax}(\bm{\ell}^{n,\mathrm{dc}}/\tau)$, so a discrete DC choice can pass to the loss while gradients still flow back through $g_{\mathrm{dc}}$. The discrete sample is essential here: evaluating the inventory cap on the soft mass $\sum_j p^{n,\mathrm{dc}}_j \min\{q^n, \mathrm{Inv}^n_j\}$ instead would let a hedge between an infeasible primary and a feasible secondary DC escape penalty.

Two feasibility requirements in $\mathcal{Z}^t$ bind on the DC selected by $\mathbf{a}^n$: the per-DC inventory cap ($\sum_k z_{j,k} \le \mathrm{Inv}^n_j$) and the demand-satisfaction requirement that the requested quantity $q^n$ be served. The constraint penalty $\mathcal{L}_{\mathrm{const}}$ encodes both as the demand shortfall after each DC's allocation is capped by its on-hand inventory:
\begin{equation*}
\mathcal{L}^n_{\mathrm{const}} \;=\; \lambda_{\mathrm{const}} \left[\, q^n \;-\; \sum_{j \in \mathcal{J}} \min\!\bigl\{q^n a^n_j,\; \mathrm{Inv}^n_{j}\bigr\} \,\right]^+,
\end{equation*}
where $[x]^+ = \max(0, x)$ and $\lambda_{\mathrm{const}}$ scales the penalty. Because $\mathbf{a}^n$ is one-hot, the served quantity reduces to $\min\{q^n, \mathrm{Inv}^n_{j^*}\}$ at the chosen primary DC $j^*$, so the penalty fires precisely when that DC's inventory is insufficient.

\paragraph{Cost-Alignment Loss.} The cost-alignment penalty $\mathcal{L}_{\mathrm{cost}}$ is a self-supervised regularizer that pulls probability mass toward low-cost (DC, carrier-service) pairs. It operates on the head probabilities $(\mathbf{p}^{n,\mathrm{dc}}, \mathbf{p}^{n,\mathrm{carrier}})$ defined above, so the gradient flows through a smooth surface. Letting $c^n_{j,k,\omega}$ be the realized \emph{immediate} fulfillment cost for option $(j,k)$ under scenario $\omega$ (shipping plus delivery-time penalty, no second-stage recourse) drawn from $\mathbf{\Omega}^n_{\mathrm{cost}}$, the term computes the expected immediate cost under the predicted joint $p^{n,\mathrm{dc}}_j p^{n,\mathrm{carrier}}_{j,k}$:
\begin{equation*}
\mathcal{L}^n_{\mathrm{cost}} \;=\; \lambda_{\mathrm{cost}}\, \frac{1}{E} \sum_{\omega=1}^E \sum_{j \in \mathcal{J}} \sum_{k \in \mathcal{K}} p^{n,\mathrm{dc}}_j\, p^{n,\mathrm{carrier}}_{j,k}\, c^n_{j,k,\omega}.
\end{equation*}

\subsubsection{Feasible Inference via Inventory-Weighted Decoding}
\label{sec:feasible_inference}

At an online epoch $t$, the trained proxy first maps the current context and sampled scenarios to the same hierarchical logits used during training,
\begin{equation*}
(\bm{\ell}^{\mathrm{dc}}, \bm{\ell}^{\mathrm{carrier}})
= f_\theta\!\left(\mathbf{X}_{\mathrm{ord}}^t,\{\mathbf{X}_{\mathrm{dc},j}^t\}_{j\in\mathcal{J}},
\{\mathbf{X}_{\mathrm{cost},j,k}^t\}_{j,k},\mathbf{\Omega}^t_{\mathrm{dem}},\mathbf{\Omega}^t_{\mathrm{time}}\right),
\end{equation*}
and converts them to probabilities $\mathbf{p}^{\mathrm{dc}} = \mathrm{softmax}(\bm{\ell}^{\mathrm{dc}})$ and $\mathbf{p}^{\mathrm{carrier}} = \mathrm{softmax}_k(\bm{\ell}^{\mathrm{carrier}})$. These probabilities are not used directly as fractional decisions. Instead, the decoder reweights each DC's predicted probability by the fractional coverage that the DC can offer for the current line: for a product line with requested quantity $q^t$,
\begin{equation*}
r_j^t=\min\{\mathrm{Inv}_j^t/q^t,1\},\qquad
s_j^t=p_j^{\mathrm{dc}} r_j^t,
\end{equation*}
where $r_j^t \in [0,1]$ leaves a fully-covering DC undisturbed and discounts a partially-covering one in proportion to its coverage. The reweighting approximates the second-stage recourse cost $Q$ that the training loss does not directly model: depleting a DC whose stock barely covers the current line raises the likelihood of a costly split or stockout on a future order that the same DC could have served, so steering volume toward DCs with healthier inventory buffers preserves serviceability over the horizon. The score $s_j^t$ also softens the single-DC preference learned through $\mathcal{L}_{\mathrm{const}}$, allowing multi-DC splits when shortages require them.

The decoder then sorts eligible DCs by $s_j^t$ and scans them greedily. For each visited DC, it chooses the highest-probability eligible carrier service and assigns as much residual demand as inventory permits, where $\mathcal{K}_j^t$ denotes the set of carrier services eligible at DC $j$ at epoch $t$:
\begin{equation*}
k^*(j)\in\arg\max_{k\in\mathcal{K}_j^t}p_{j,k}^{\mathrm{carrier}},\qquad
x_j^t=\min\{q_{\mathrm{rem}}^t,\mathrm{Inv}_j^t\},\qquad
z_{j,k^*(j)}^{\mathrm{proxy},t}=x_j^t.
\end{equation*}
The residual $q_{\mathrm{rem}}^t$ (the quantity of the line not yet assigned, initialized to the requested quantity $q^t$) is updated after each assignment, and any remaining quantity becomes unmet demand $u^{\mathrm{proxy},t}$. Thus the learned probabilities determine the ordering and carrier choice, while the decoder enforces the demand-allocation, inventory, and eligibility constraints in $\mathcal{Z}^t_i$ corresponding to the product-line $i$ restriction of $\mathcal{Z}^t$, which can be defined as $\mathcal{Z}^t_i
=\{
(\mathbf{z}^t, \mathbf{u}^t) \in \mathbb{Z}^{|\mathcal{J}||\mathcal{K}|}_{+} \times \mathbb{Z}_{+}:
\sum_{j \in \mathcal{J}} \sum_{k \in \mathcal{K}} z^t_{j,k} + u^t = q^t; \;
\sum_{k \in \mathcal{K}} z^t_{j,k} \le \mathrm{Inv}^t_{j},\, j \in \mathcal{J}\}$. Algorithm~\ref{alg:proxy_decoder} summarizes the end-to-end online inference procedure for one product line.

\begin{algorithm}[!ht]
\caption{End-to-end online inference for one product line $i$: $f_\theta$ maps context and scenarios to logits, and the inventory-weighted decoder returns a feasible decision in $\mathcal{Z}^t_i$ (the product-line restriction of $\mathcal{Z}^t$).}
\label{alg:proxy_decoder}
\begin{algorithmic}[1]
\Require Order context $\mathbf{X}_{\mathrm{ord}}^t$, per-DC features $\{\mathbf{X}_{\mathrm{dc},j}^t\}_{j \in \mathcal{J}}$, per-option features $\{\mathbf{X}_{\mathrm{cost},j,k}^t\}_{j \in \mathcal{J},\, k \in \mathcal{K}}$, sampled scenarios $\mathbf{\Omega}^t_{\mathrm{dem}}, \mathbf{\Omega}^t_{\mathrm{time}}$, requested quantity $q^t \in \mathbb{Z}_+$, inventories $\{\mathrm{Inv}_j^t \in \mathbb{Z}_+\}_{j \in \mathcal{J}}$, eligibility sets $\{\mathcal{K}_j^t \subseteq \mathcal{K}\}_{j \in \mathcal{J}}$.
\Ensure  Feasible decision $(\mathbf{z}^{\mathrm{proxy},t}, u^{\mathrm{proxy},t}) \in \mathcal{Z}^t_i$.
\State $(\bm{\ell}^{\mathrm{dc}}, \bm{\ell}^{\mathrm{carrier}}) \gets f_\theta\bigl(\mathbf{X}_{\mathrm{ord}}^t, \{\mathbf{X}_{\mathrm{dc},j}^t\}, \{\mathbf{X}_{\mathrm{cost},j,k}^t\}, \mathbf{\Omega}^t_{\mathrm{dem}}, \mathbf{\Omega}^t_{\mathrm{time}}\bigr)$ \Comment{forward pass}
\State $\mathbf{p}^{\mathrm{dc}} \gets \mathrm{softmax}(\bm{\ell}^{\mathrm{dc}})$;\quad $\mathbf{p}^{\mathrm{carrier}} \gets \mathrm{softmax}_k(\bm{\ell}^{\mathrm{carrier}})$
\State For each $j \in \mathcal{J}$: $r_j^t \gets \min\{\mathrm{Inv}_j^t/q^t,\, 1\}$;\quad $s_j^t \gets p_j^{\mathrm{dc}}\, r_j^t$ \Comment{inventory-weighted score}
\State $\mathcal{J}^{\mathrm{elig}} \gets \{j \in \mathcal{J} : \mathrm{Inv}_j^t > 0,\, \mathcal{K}_j^t \neq \emptyset\}$, sorted by $s_j^t$ in decreasing order
\State $q_{\mathrm{rem}} \gets q^t$;\quad $z_{j,k}^{\mathrm{proxy},t} \gets 0$ for all $(j,k) \in \mathcal{J} \times \mathcal{K}$
\For{$j$ in $\mathcal{J}^{\mathrm{elig}}$ \textbf{while} $q_{\mathrm{rem}} > 0$}
  \State $x_j \gets \min\{q_{\mathrm{rem}},\, \mathrm{Inv}_j^t\}$
  \State $k^*(j) \gets \arg\max_{k \in \mathcal{K}_j^t} p_{j,k}^{\mathrm{carrier}}$
  \State $z_{j,k^*(j)}^{\mathrm{proxy},t} \gets x_j$;\quad $q_{\mathrm{rem}} \gets q_{\mathrm{rem}} - x_j$
\EndFor
\State $u^{\mathrm{proxy},t} \gets q_{\mathrm{rem}}$
\State \Return $(\mathbf{z}^{\mathrm{proxy},t}, u^{\mathrm{proxy},t})$
\end{algorithmic}
\end{algorithm}

Proposition~\ref{prop:proxy_feasibility} establishes feasibility, runtime, and policy-value estimation properties of the proposed framework.

\begin{proposition}[Feasibility, Runtime, and Policy-Value Bound]
\label{prop:proxy_feasibility}
For each product line, Algorithm~\ref{alg:proxy_decoder} returns a feasible decision in $\mathcal{Z}^t_i$ and runs in $O(|\mathcal{J}|\log|\mathcal{J}| + \sum_j |\mathcal{K}_j^t|)$ time, or $O(|\mathcal{J}|\log|\mathcal{J}| + |\mathcal{J}||\mathcal{K}|)$ when every carrier is eligible at every DC. Since $\mathcal{Z}^t=\prod_{i\in\mathcal{I}}\mathcal{Z}^t_i$, concatenating the per-product decisions gives $(\mathbf{z}^{\mathrm{proxy},t},\mathbf{u}^{\mathrm{proxy},t}) \in \mathcal{Z}^t$. Define
\(
f_{\mathrm{proxy}}^t := \mathbb{E}\!\left[ C(\mathbf{z}^{\mathrm{proxy},t},\mathbf{u}^{\mathrm{proxy},t},\tilde{\boldsymbol{\Delta}}^t) + Q(\mathbf{z}^{\mathrm{proxy},t},\tilde{\mathbf{D}}^t) \,\middle|\, \mathbf{X}^t \right],
\)
where the expectation is taken jointly over $(\tilde{\boldsymbol{\Delta}}^t, \tilde{\mathbf{D}}^t)$.
Then $f_{\mathrm{proxy}}^t$ upper-bounds the optimal objective value of the two-stage contextual stochastic program~\eqref{eq:full} at epoch $t$.
\end{proposition}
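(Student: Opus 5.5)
The proof splits into three parts: feasibility of the decoder output, its running time, and the bound on $f_{\mathrm{proxy}}^t$.

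\emph{Feasibility.} Fix a product line $i$ and track the invariants of the greedy scan in Algorithm~\ref{alg:proxy_decoder}. Three facts need to be checked.
\begin{enumerate}[leftmargin=*,topsep=0pt,itemsep=0pt]
\item Each visited DC $j$ receives $x_j=\min\{q_{\mathrm{rem}},\mathrm{Inv}_j^t\}$. This quantity is a nonnegative integer, because $q_{\mathrm{rem}}$ stays a nonnegative integer by induction: it starts at $q^t\in\mathbb{Z}_+$ and decreases by at most its current value.
\item Each DC is visited at most once and all of its assignment goes to the single carrier $k^*(j)\in\mathcal{K}_j^t$. Hence $\sum_k z_{j,k}^{\mathrm{proxy},t}=x_j\le \mathrm{Inv}_j^t$. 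Unvisited DCs have zero assignment, so the inventory constraint holds trivially, and eligibility holds by construction.
\item A telescoping argument gives $\sum_j x_j = q^t - q_{\mathrm{rem}}^{\mathrm{final}}$. Setting $u^{\mathrm{proxy},t}=q_{\mathrm{rem}}^{\mathrm{final}}\ge 0$ therefore yields $\sum_{j,k} z_{j,k}+u=q^t$.
\end{enumerate}
Together these give membership in $\mathcal{Z}^t_i$. The constraints defining $\mathcal{Z}^t$ are indexed by $i$ and involve only the product-$i$ variables, so $\mathcal{Z}^t=\prod_i\mathcal{Z}^t_i$ and the concatenated decision lies in $\mathcal{Z}^t$. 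One edge case needs attention: when $q^t=0$ the ratio $r_j^t$ is undefined. I would adopt a convention, for example skipping the line, and the output $z=0,u=0$ is then trivially feasible.

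\emph{Runtime.} I count the cost of each step of the decoder, treating the forward pass as a separate fixed cost.
\begin{enumerate}[leftmargin=*,topsep=0pt,itemsep=0pt]
\item The DC softmax costs $O(|\mathcal{J}|)$, and the carrier softmaxes restricted to eligible carriers cost $O(\sum_j|\mathcal{K}_j^t|)$.
\item Computing the scores $r_j^t,s_j^t$ and filtering eligible DCs is $O(|\mathcal{J}|)$.
\item Sorting the eligible DCs is $O(|\mathcal{J}|\log|\mathcal{J}|)$.
\item The loop visits each DC at most once, with $O(1)$ work for the update plus $O(|\mathcal{K}_j^t|)$ for the argmax over carriers.
\end{enumerate}
Summing gives $O(|\mathcal{J}|\log|\mathcal{J}|+\sum_j|\mathcal{K}_j^t|)$. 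When every carrier is eligible at every DC, $\sum_j|\mathcal{K}_j^t|=|\mathcal{J}||\mathcal{K}|$, which gives the second form of the bound.

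\emph{Upper bound.} Let $F(\mathbf{z},\mathbf{u}):=\mathbb{E}[C(\mathbf{z},\mathbf{u},\tilde{\boldsymbol\Delta}^t)+Q(\mathbf{z},\tilde{\mathbf D}^t)\mid\mathbf X^t]$, so that the optimal value of \eqref{eq:full} is $\min_{(\mathbf z,\mathbf u)\in\mathcal Z^t}F(\mathbf z,\mathbf u)$. Conditional on $\mathbf X^t$, the proxy decision is a fixed (measurable) function of $\mathbf X^t$ and of the sampled scenarios $\mathbf\Omega^t$. Condition additionally on $\mathbf\Omega^t$, assuming $\mathbf\Omega^t$ is drawn independently of the true $(\tilde{\boldsymbol\Delta}^t,\tilde{\mathbf D}^t)$ given $\mathbf X^t$. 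Then the decision is a deterministic element of $\mathcal Z^t$, so its conditional objective is at least the minimum. Taking the outer expectation over $\mathbf\Omega^t$ preserves the inequality, giving $f^t_{\mathrm{proxy}}\ge\min_{\mathcal Z^t}F$.

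The step I expect to need the most care is confirming that $F$ is well defined at the proxy decision, i.e.\ that $Q$ is finite. This holds because the second stage is always feasible: take $\mathbf v=0$ and $\mathbf w=\tilde{\mathbf D}^t$, and use the first-stage inventory feasibility just shown, so that $\sum_k z_{i,j,k}\le\mathrm{Inv}^t_{i,j}$. Moreover $0\le Q\le\rho\sum_i\tilde D_i^t$, so integrability of the demand suffices. I also need to state the independence and measurability convention for the internal scenario sampling explicitly. Given these, the inequality is simply the elementary fact that a feasible point's objective value bounds the minimum from above.
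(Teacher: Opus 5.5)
Your proposal is correct and follows the paper's proof essentially step for step: per-product feasibility via the loop invariant $q^t = q_{\mathrm{rem}} + \sum_j x_j$ together with $\mathcal{Z}^t=\prod_i\mathcal{Z}^t_i$, a sort-plus-scan runtime count, and the observation that a feasible point's objective upper-bounds the minimum. Your additions (integrality of $q_{\mathrm{rem}}$, the $q^t=0$ convention, finiteness of $Q$, and the conditional-independence convention for the sampled scenarios) are extra rigor the paper leaves implicit; the one step that needs a line of justification is charging the carrier softmax only $O(\sum_j|\mathcal{K}_j^t|)$, because Algorithm~\ref{alg:proxy_decoder} normalizes over all of $\mathcal{K}$ at each DC, and this is fine because for fixed $j$ the softmax preserves the order of the logits, so restricting it to $\mathcal{K}_j^t$ leaves $k^*(j)$ unchanged (the paper simply omits this step from the decoder's cost).
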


\begin{proof}{Proof}
Since the constraints in $\mathcal{Z}^t$ are separable across products with  $\mathcal{Z}^t = \prod_{i \in \mathcal{I}} \mathcal{Z}^t_i$,
and the order-level decision concatenates per-product outputs of Algorithm~\ref{alg:proxy_decoder}, it suffices to verify $(\mathbf{z}^{\mathrm{proxy},t}_i, u^{\mathrm{proxy},t}_i) \in \mathcal{Z}^t_i$ for the product $i$. Each loop iteration assigns an integer $x_j = \min\{q_{\mathrm{rem}}, \mathrm{Inv}_j^t\} \le \mathrm{Inv}_j^t$ to a single carrier at a previously unvisited $j$, so $\sum_k z^{\mathrm{proxy},t}_{j,k} \le \mathrm{Inv}_j^t$, and the loop invariant $q^t = q_{\mathrm{rem}} + \sum_j x_j$ gives $\sum_{j,k} z^{\mathrm{proxy},t}_{j,k} + u^{\mathrm{proxy},t} = q^t$ on termination. Hence $(\mathbf{z}^{\mathrm{proxy},t},\mathbf{u}^{\mathrm{proxy},t}) \in \mathcal{Z}^t$. Then, by utilizing this feasible solution, an upper bound can be constructed as $f_{\mathrm{proxy}}^t = \mathbb{E}[\,C(\mathbf{z}^{\mathrm{proxy},t},\mathbf{u}^{\mathrm{proxy},t},\tilde{\boldsymbol{\Delta}}^t) + Q(\mathbf{z}^{\mathrm{proxy},t},\tilde{\mathbf{D}}^t) \mid \mathbf{X}^t\,] \ge \min_{(\mathbf{z},\mathbf{u}) \in \mathcal{Z}^t} \mathbb{E}[\,C(\mathbf{z},\mathbf{u},\tilde{\boldsymbol{\Delta}}^t) + Q(\mathbf{z},\tilde{\mathbf{D}}^t)\mid \mathbf{X}^t\,]$.

For runtime, sorting eligible DCs by $s_j^t$ costs $O(|\mathcal{J}|\log|\mathcal{J}|)$. The allocation scan over sorted DCs costs $O(|\mathcal{J}|)$, and selecting $k^*(j)$ scans the eligible carrier sets at total cost $\sum_j |\mathcal{K}_j^t|$. Thus, the decoder costs $O(|\mathcal{J}|\log|\mathcal{J}| + \sum_j |\mathcal{K}_j^t|)$, showing the desired result. When all carriers are eligible at all DCs, $\sum_j |\mathcal{K}_j^t|=|\mathcal{J}||\mathcal{K}|$.
\end{proof}

To evaluate the policy obtained in practice, an independent out-of-sample scenario set can be utilized. The following remark clarifies the statistical interpretation of this evaluation procedure.

\begin{remark}
\label{remark:ProxyEstimators}
To estimate the expected performance of the policy $(\mathbf{z}^{\mathrm{proxy},t},\mathbf{u}^{\mathrm{proxy},t})$ returned by Algorithm~\ref{alg:proxy_decoder} at epoch $t$, consider an independent scenario set $\Omega_{N_2}^t = \{\omega_\ell\}_{\ell=1}^{N_2}$ where  the scenarios are drawn i.i.d. from an evaluation distribution $\mathbb{Q}(\cdot\mid X_t)$ and each scenario $\omega\in\Omega_{N_2}^t$ specifies a joint realization $(\boldsymbol{\Delta}_\omega^t, \mathbf{D}_\omega^t)$. Let $Y_\omega^t := C(\mathbf{z}^{\mathrm{proxy},t},\mathbf{u}^{\mathrm{proxy},t},\boldsymbol{\Delta}_\omega^t) + Q(\mathbf{z}^{\mathrm{proxy},t},\mathbf{D}_\omega^t)$.
Assuming $\mathbb{E}_{\mathbb{Q}}[\,(Y_\omega^t)^2 \mid \mathbf{X}^t\,] < \infty$, the sample-average estimator $\bar f_t^{\mathrm{proxy}}=\frac{1}{N_2}\sum_{\omega\in\Omega_{N_2}^t}Y_\omega^t$ is unbiased for the conditional policy value $\mathbb{E}_{\mathbb{Q}}[\,Y_\omega^t \mid \mathbf{X}^t\,]$ and converges almost surely to this value as $N_2 \to \infty$. For $N_2 \ge 2$, $(s_{\mathrm{proxy}}^t)^2 = \frac{1}{N_2-1}\sum_{\omega\in\Omega_{N_2}^t}(Y_\omega^t - \bar f_t^{\mathrm{proxy}})^2$ is unbiased and consistent for $\mathrm{Var}_{\mathbb{Q}}(Y_\omega^t \mid \mathbf{X}^t)$.
\end{remark}

\subsection{Training Data Collection and Model Selection}
This Section describes the experimental protocol used to generate proxy-training labels and evaluate out-of-sample policy performance. The training horizon spans three consecutive weeks, partitioned chronologically: the initial two weeks form the forecast training split $\mathcal{D}_{\mathrm{fcast}}$, the third week constitutes the proxy training split $\mathcal{D}_{\mathrm{proxy}}$, and a held-out fourth week is used for online evaluation. Forecasters are trained on $\mathcal{D}_{\mathrm{fcast}}$ alone, with no exposure to $\mathcal{D}_{\mathrm{proxy}}$ or the evaluation week to prevent leakage into labels or test; they then generate context-conditional scenarios for each order in $\mathcal{D}_{\mathrm{proxy}}$, and C-SAA solves the resulting instances against the inventory state at each order's arrival epoch to produce the C-SAA-optimal label $\mathbf{z}^{n,*}$ (optimal over the finite scenario sample, not the true conditional distribution). The same scenario tensors feed both C-SAA (for labels) and the proxy (as inputs), so the proxy is trained on the uncertainty distribution that produced its labels.

The proxy is then trained on $\mathcal{D}_{\mathrm{proxy}}$ with hierarchical location-carrier labels $(j^{n,*}, k^{n,*})$ extracted from $\mathbf{z}^{n,*}$. Before the final evaluation, the forecasters are refitted on $\mathcal{D}_{\mathrm{train}} = \mathcal{D}_{\mathrm{fcast}} \cup \mathcal{D}_{\mathrm{proxy}}$ for deployment fidelity; the proxy itself is not retrained. A temporal subset of $\mathcal{D}_{\mathrm{proxy}}$ is held out as a validation set, and the proxy checkpoint is selected by Hit@5, defined as the rate at which the C-SAA-chosen (DC, carrier) pair appears in the proxy's top-five predicted options. Top-five is used because multiple near-optimal pairs typically exist per instance, and at deployment the inventory-weighted decoder reshuffles top-ranked options against real-time inventory.

\section{Computational Study} \label{sec:computational_study}
The proxy is evaluated in a simulated online environment built from JD.com transaction records \citep{shen2024jd}, with independent daily instances spanning the 06:00--18:00 peak operational window. The planning horizon $\mathcal{T}$ corresponds to this window, and second-stage demand $\tilde{\mathbf{D}}^t$ aggregates remaining intra-day arrivals from the demand forecaster's 24-hour rolling output. Inventory and queue states are reinitialized each day from historical demand parameters with no intra-day replenishment. The primary metric is cumulative realized objective value, comprising base shipping cost, delivery-time penalties, multi-unit consolidation discounts, and stockout penalties. Late delivery percentage and cumulative lateness are reported as service-level metrics.

Because the public JD.com release is dominated by a single carrier and lacks exact facility coordinates, the dataset is augmented with a synthetic multi-carrier-service layer and synthetic geocoordinates, calibrated against a separate proprietary multi-carrier dataset. Preprocessing, calibration, and instance construction are detailed in Appendices~\ref{app:instance} and~\ref{app:prep}. The first three weeks supply training data and the final week is held out for online evaluation. A counterfactual simulator generates calibrated delivery-time realizations for unobserved fulfillment decisions \citep{ye2025contextual}, and all reported metrics average 50 independent simulation runs.

Evaluation proceeds order by order in rolling-horizon fashion, with the inventory state updated after each decision. The C-SAA solver uses Gurobi 13.0.1 with four threads per instance, $S=10$ candidate solutions at $N_1=50$ scenarios each, common-set evaluation on $N_2=500$ scenarios, and a 0.01\% relative MIP gap. Experiments ran on a shared HPC cluster (RHEL 9.6, dual Intel Xeon Gold 6226 at 2.7\,GHz, NVIDIA RTX 6000 GPUs).

\subsection{Comparative Analysis} \label{sec:comparative}
The proxy and the online C-SAA solver are compared against five baselines covering heuristic, point-forecast, and benchmark fulfillment approaches. The first three baselines are extended to the sequential setting from \citet{ye2025contextual}.
\begin{itemize}[leftmargin=*,topsep=0pt,itemsep=0pt,parsep=0pt,partopsep=0pt]
    \item \textbf{Greedy:} minimizes the item-specific base shipping cost $c^{\mathrm{ship}}_{i,j,k}$ for each product $i$ and ignores stochastic delivery-time deviations. It mirrors the common practice of always sourcing from the cheapest feasible option.
    \item \textbf{Predict-Then-Optimize (PTO):} substitutes $\mathbb{P}(\cdot \mid \mathbf{X}^t)$ with its conditional mean and solves the deterministic equivalent. It is a forecast-driven policy that acts on expected values and ignores the spread of uncertainty.
    \item \textbf{Empirical-SAA:} samples $\mathbf{\Omega}^t$ uniformly from the historical unconditional empirical distributions, ignoring contextual signals. It accounts for uncertainty but not for the context of the specific order.
    \item \textbf{Deterministic LP (DTLP)} \citep{acimovic2015making}: solves a linear program on a demand forecast to obtain dual prices that value the future opportunity cost of consuming inventory, then fulfills each arriving order against those prices.
    \item \textbf{Primal-Dual} \citep{andrews2019primal}: maintains dual prices on inventory and updates them online as orders arrive, without relying on a demand forecast.
\end{itemize}

Table~\ref{tab:sim_summary_test} reports aggregate out-of-sample performance over the test week: the total realized objective (base shipping cost, delivery-time penalties, consolidation discounts, and stockout penalties), the average late-delivery rate and cumulative lateness, the per-order runtime, and the $q_{50}$--$q_{95}$ quantiles of each. In the calibrated simulation environment, the proxy attains a total objective of $829{,}805$, improving on the online finite-sample C-SAA reference ($857{,}892$) by 3.3\%. The average late delivery rate falls from 5.71\% to 4.93\%, and the daily gap is stable. The proxy's edge over its own labels reflects scenario-noise averaging. The C-SAA reference at $(S,N_1,N_2)=(10,50,500)$ is itself exposed to scenario generation error, finite-$N_1$ sampling variance, and two-stage aggregation bias, while the proxy averages over the full corpus of these noisy labels with $\mathcal{L}_{\mathrm{cost}}$ as a structural regularizer. The proxy improves on C-SAA in aggregate but can lag where C-SAA's finite sample lands near the true optimum, as on the largest tier in Table~\ref{tab:runtime_gap_size_test}. A higher-budget C-SAA may narrow the aggregate gap. Deterministic and point-prediction baselines lag substantially: Greedy reaches $928{,}970$ with a 9.80\% late rate, and PTO and Empirical-SAA both exceed $1{,}000{,}000$ in objective and 11\% in late rate, confirming the value of scenario-aware contextual information \citep{ye2025contextual}.

The proxy executes each decision in 0.006 seconds, an acceleration above $2{,}800\times$ over C-SAA's 17.084 seconds. It is also faster than the simpler Greedy (0.104s) and PTO (0.223s) baselines, fitting within the sub-second latency budgets of high-throughput fulfillment systems.

Relative to DTLP, the strongest classical baseline on objective, the proxy lowers total cost by $10.7\%$ ($829{,}805$ versus $928{,}889$) and roughly halves the late-delivery rate ($4.93\%$ versus $9.77\%$); the cost gap widens to $48.7\%$ against the primal-dual algorithm. Both run within the same sub-second budget as the proxy, yet neither matches it on cost or service. The primal-dual algorithm is the fastest method ($0.004$s) but the most expensive (objective $1{,}618{,}102$): it updates inventory dual prices online without a demand forecast, so they stay far from what the forecast-based and scenario-aware policies achieve. DTLP performs about as well as greedy on both cost and service (objective $928{,}889$; late rate $9.77\%$) and runs in $0.101$s. Both price inventory through dual variables but ignore the contextual delivery-time uncertainty that the proxy and C-SAA capture through scenario forecasts, so neither avoids the options likely to ship late, and both leave late rates well above the proxy's $4.93\%$. The proxy outperforms both on every cost and service metric at comparable per-order speed.

\begin{table}[!ht]
\centering
\caption{Aggregate out-of-sample performance for the proxy, the C-SAA reference, and the five fulfillment baselines. Totals, average late rate, and cumulative lateness are means with 95\% CIs across the 50 replications; runtimes and quantiles ($q_{50}, q_{75}, q_{90}, q_{95}$) are point estimates.}
\label{tab:sim_summary_test}
\begin{adjustbox}{width=\linewidth,center}
\begin{tabular}{lrrrrrrr}
\toprule
\textbf{Metric} & \textbf{Proxy} & \textbf{C-SAA} & \textbf{DTLP} & \textbf{Greedy} & \textbf{Empirical-SAA} & \textbf{PTO} & \textbf{Primal-Dual} \\
\midrule
Total Objective Value ($\downarrow$) & 829,805.22 $\pm$ 3,591.60 & 857,892.49 $\pm$ 3,397.22 & 928,889.34 $\pm$ 3,587.17 & 928,970.28 $\pm$ 4,066.40 & 1,015,540.50 $\pm$ 3,818.07 & 1,021,986.33 $\pm$ 3,751.13 & 1,618,102.13 $\pm$ 3,204.83 \\
Avg Late Delivery Rate (\%) ($\downarrow$) & 4.93 $\pm$ 0.07 & 5.71 $\pm$ 0.08 & 9.77 $\pm$ 0.08 & 9.80 $\pm$ 0.08 & 11.12 $\pm$ 0.08 & 12.46 $\pm$ 0.09 & 8.93 $\pm$ 0.07 \\
Avg Cumulative Lateness ($\downarrow$) & 0.122 $\pm$ 0.002 & 0.132 $\pm$ 0.002 & 0.191 $\pm$ 0.002 & 0.191 $\pm$ 0.003 & 0.226 $\pm$ 0.002 & 0.229 $\pm$ 0.002 & 0.172 $\pm$ 0.002 \\
Avg Run Time (s) ($\downarrow$) & 0.006 & 17.084 & 0.101 & 0.104 & 16.546 & 0.223 & 0.004 \\
Objective q50 ($\downarrow$) & 828,623.86 & 856,328.63 & 927,611.38 & 925,307.78 & 1,015,155.68 & 1,022,939.95 & 1,616,524.13 \\
Objective q75 ($\downarrow$) & 836,647.11 & 863,805.48 & 934,345.23 & 936,506.18 & 1,021,156.23 & 1,028,017.25 & 1,625,064.43 \\
Objective q90 ($\downarrow$) & 841,212.72 & 868,645.43 & 943,459.70 & 951,517.40 & 1,034,531.14 & 1,032,943.11 & 1,631,733.95 \\
Objective q95 ($\downarrow$) & 852,203.32 & 883,859.23 & 952,590.11 & 959,385.00 & 1,039,722.57 & 1,046,086.78 & 1,636,044.90 \\
Late Delivery q50 (\%) ($\downarrow$) & 4.87 & 5.70 & 9.74 & 9.76 & 11.07 & 12.45 & 8.90 \\
Late Delivery q75 (\%) ($\downarrow$) & 5.03 & 5.86 & 9.92 & 9.92 & 11.26 & 12.69 & 9.07 \\
Late Delivery q90 (\%) ($\downarrow$) & 5.22 & 6.02 & 10.10 & 10.19 & 11.42 & 12.78 & 9.25 \\
Late Delivery q95 (\%) ($\downarrow$) & 5.34 & 6.07 & 10.28 & 10.41 & 11.61 & 12.89 & 9.39 \\
Cumulative Lateness q50 ($\downarrow$) & 0.121 & 0.131 & 0.190 & 0.189 & 0.225 & 0.230 & 0.171 \\
Cumulative Lateness q75 ($\downarrow$) & 0.126 & 0.136 & 0.194 & 0.196 & 0.229 & 0.233 & 0.177 \\
Cumulative Lateness q90 ($\downarrow$) & 0.129 & 0.139 & 0.200 & 0.206 & 0.238 & 0.236 & 0.181 \\
Cumulative Lateness q95 ($\downarrow$) & 0.137 & 0.149 & 0.206 & 0.211 & 0.241 & 0.245 & 0.184 \\
\bottomrule
\end{tabular}
\end{adjustbox}
\end{table}

\begin{figure}[!ht]
    \centering
    \includegraphics[width=0.9\linewidth]{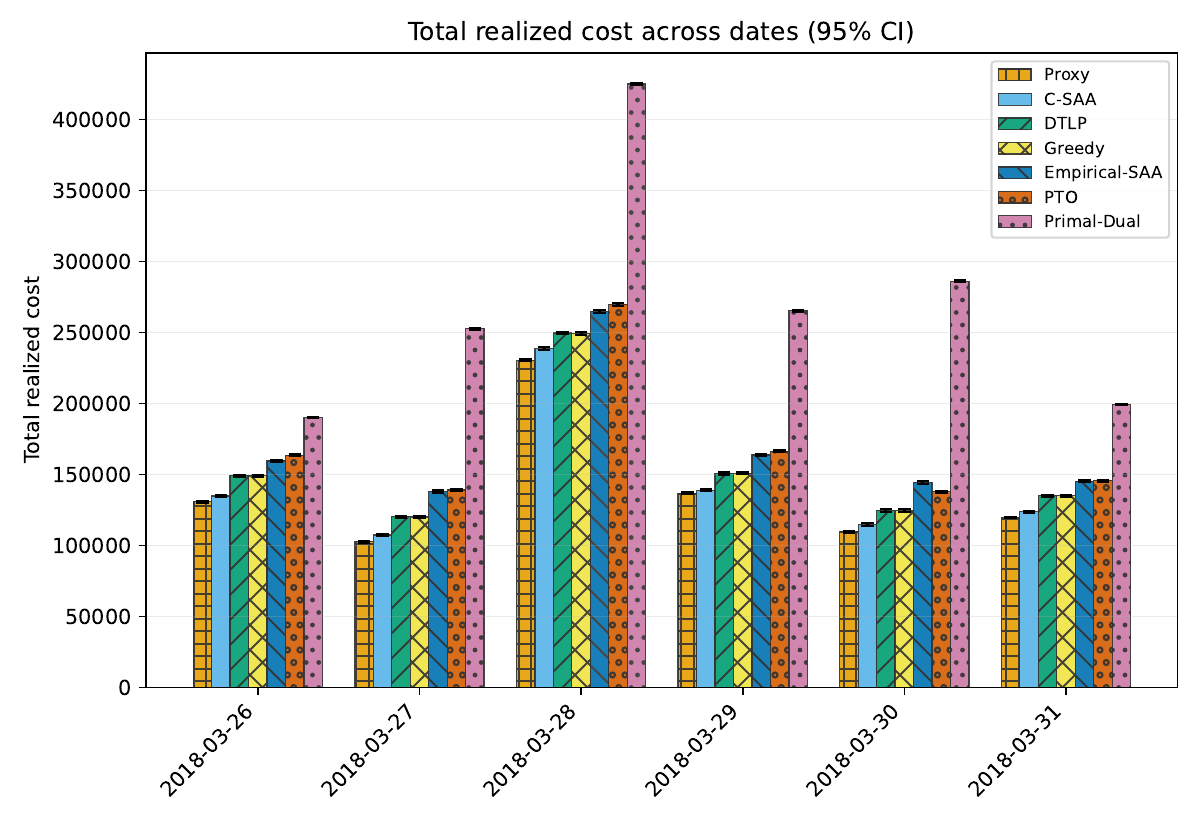}
    \caption{Total realized fulfillment cost by evaluation date, summed over orders served each day and averaged across the 50 replications.}
    \label{fig:sim_test_realized_cost_by_date_bar}
\end{figure}

Figure~\ref{fig:sim_test_realized_cost_by_date_bar} breaks the realized cost down by date, summing over the orders served each day and averaging across the 50 replications. The proxy stays at or below every baseline on each evaluation date and keeps its margin over the baseline policies across the week.

\subsection{Computational Scalability}

This Section highlights the computational performance of the proposed proxy against the baseline C-SAA approach by examining its scalability in terms of number of scenarios and size of the instances considered.

\subsubsection{Scenario Size Scalability}
Figure~\ref{fig:scenario_sensitivity_obj_runtime_combined_vs_n1} reports solution quality and latency as the scenario sample size $N_1$ varies. For the C-SAA solver, total objective value decreases monotonically with $N_1$, but average runtime grows roughly linearly to about 30 seconds at $N_1=90$. The proxy, by contrast, attains its minimum objective at the training value $N_1=50$ and degrades little for larger $N_1$, with per-decision latency remaining near 0.006 seconds across the tested $N_1$ range.

\begin{figure}[!ht]
    \centering
    \includegraphics[width=0.78\linewidth]{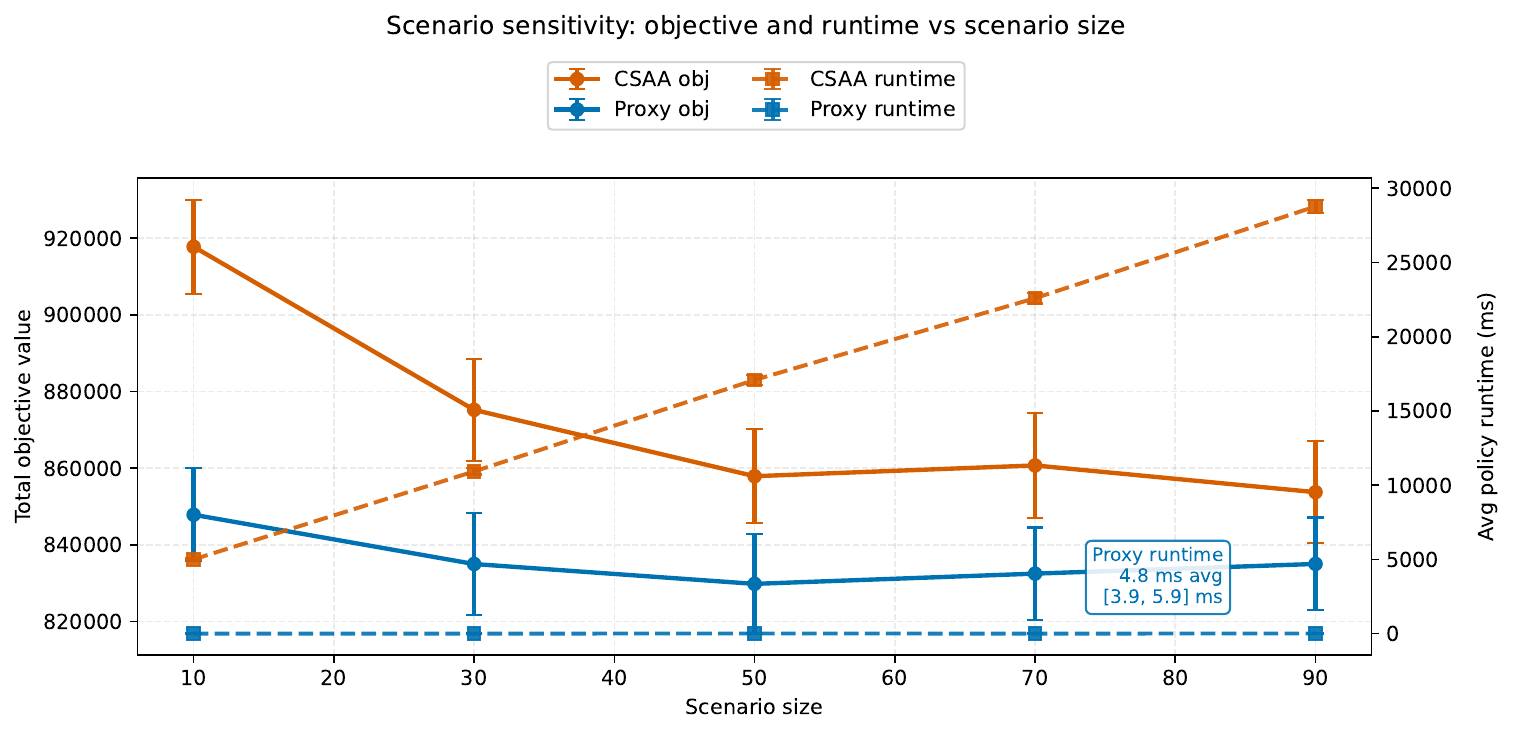}
    \caption{Scenario Size Sensitivity.}
\label{fig:scenario_sensitivity_obj_runtime_combined_vs_n1}
\end{figure}

\subsubsection{Instance Size Scalability}
Table~\ref{tab:runtime_gap_size_test} stratifies the test evaluation set by combinatorial complexity, defined as the product of the order quantity and the total number of eligible fulfillment options. As instance size grows, the C-SAA solver's runtime increases from roughly 15 to 27 seconds, while the proxy's per-decision time stays near 0.006 seconds across the tested tiers. On the largest instances the C-SAA point estimate is marginally lower than the proxy's, but the 95\% confidence intervals overlap, so the two are statistically indistinguishable on that tier. The proxy maintains a lower late delivery rate across all tiers.

\begin{table}[!ht]
\centering
\caption{Out-of-sample performance stratified by instance size, defined as the product of the order quantity and total eligible fulfillment options. Size tiers use $k$-means on a log scale with boundaries at $1{,}026$ and $1{,}927$ options.}
\label{tab:runtime_gap_size_test}
\begin{adjustbox}{width=\linewidth,center}
\begin{tabular}{llrlrrrr}
\toprule
\textbf{Size} & \textbf{Method} & \textbf{Orders} & \textbf{Size Range} & \textbf{Avg Runtime (s)} & \textbf{Avg Obj / Order ($\pm$ CI95)} & \textbf{Late \% ($\pm$ CI95)} & \textbf{Cum. Late ($\pm$ CI95)} \\
\midrule
\multirow{2}{*}{Small} & C-SAA & \multirow{2}{*}{31,435} & \multirow{2}{*}{599--1,026} & 15.479 & 14.86 $\pm$ 0.07 & 5.444 $\pm$ 0.074 & 0.097 $\pm$ 0.002 \\
 & Proxy &  &  & 0.006 & 14.16 $\pm$ 0.07 & 4.682 $\pm$ 0.071 & 0.087 $\pm$ 0.002 \\
\midrule
\multirow{2}{*}{Medium} & C-SAA & \multirow{2}{*}{5,853} & \multirow{2}{*}{1,026--1,927} & 23.880 & 44.53 $\pm$ 0.21 & 6.618 $\pm$ 0.124 & 0.221 $\pm$ 0.005 \\
 & Proxy &  &  & 0.006 & 43.30 $\pm$ 0.17 & 5.738 $\pm$ 0.100 & 0.196 $\pm$ 0.004 \\
\midrule
\multirow{2}{*}{Large} & C-SAA & \multirow{2}{*}{1,128} & \multirow{2}{*}{1,927--139,600} & 26.633 & 115.48 $\pm$ 1.04 & 8.249 $\pm$ 0.220 & 0.665 $\pm$ 0.026 \\
 & Proxy &  &  & 0.006 & 116.38 $\pm$ 1.15 & 7.513 $\pm$ 0.184 & 0.721 $\pm$ 0.029 \\
\bottomrule
\end{tabular}
\end{adjustbox}
\end{table}

\subsection{Proxy Ablations} \label{sec:proxy-ablations}
An ablation study quantifies the contribution of individual framework components across the neural architecture, loss formulation, and inference strategy. Aggregate impacts on objective cost and service reliability are summarized in Tables~\ref{tab:proxy_architecture_ablation_summary}, \ref{tab:proxy_loss_ablation_summary}, and \ref{tab:proxy_inference_strategy_ablation_summary}. These ablations also stand in for natural learning-based baselines: \texttt{no\_cost\_loss} (imitation-heavy, drops $\mathcal{L}_{\mathrm{cost}}$), \texttt{no\_selection\_loss} (cost-only, drops solver-label supervision), and \texttt{no\_scenario\_module} (no-scenario proxy, same decoder). All are inferior to the full model on objective cost.

\paragraph{Architecture and Features:}
Table~\ref{tab:proxy_architecture_ablation_summary} confirms the value of the hierarchical, multimodal design. Removing the DC module (\texttt{no\_dc\_module}) strips location-specific features and inventory state, costing 11.7\% in objective and pushing the late rate to 5.45\%. Flattening the heads (\texttt{single\_tower}) costs 11.2\% in objective with a comparable late rate, indicating that joint heads sacrifice cost economies for speed. Removing the scenario module (\texttt{no\_scenario\_module}) forces the proxy onto deterministic features and produces a consistent cost increase.

\begin{table}[!ht]
\centering
\caption{Impact of architectural components on out-of-sample performance (means $\pm$ 95\% CIs).}
\label{tab:proxy_architecture_ablation_summary}
\begin{adjustbox}{width=\linewidth,center}
\begin{tabular}{lrrrr}
\toprule
\textbf{Metric} & \textbf{full\_model} & \textbf{no\_scenario\_module} & \textbf{single\_tower} & \textbf{no\_dc\_module} \\
\midrule
\textbf{Objective Total} & 829,805.22 $\pm$ 3,591.60 & 831,981.53 $\pm$ 3,307.30 & 922,693.47 $\pm$ 3,270.59 & 926,661.69 $\pm$ 3,579.82 \\
\textbf{Late Delivery \%} & 4.93 $\pm$ 0.07 & 4.86 $\pm$ 0.08 & 4.55 $\pm$ 0.07 & 5.45 $\pm$ 0.07 \\
\textbf{Cumulative Lateness} & 0.122 $\pm$ 0.002 & 0.125 $\pm$ 0.002 & 0.115 $\pm$ 0.002 & 0.136 $\pm$ 0.002 \\
\bottomrule
\end{tabular}
\end{adjustbox}
\end{table}

\paragraph{Loss Formulation:}
Table~\ref{tab:proxy_loss_ablation_summary} isolates the contribution of each composite-loss term. Removing the selection loss (\texttt{no\_selection\_loss}) costs 3.7\% in objective, showing that direct cost-based training alone, without C-SAA label supervision, is insufficient. Removing the cost-alignment term $\mathcal{L}_{\mathrm{cost}}$ costs 3.2\% in objective and pushes the late rate to 5.63\%, isolating its role in smoothing expected scenario costs and consistent with the proxy's improvement over the finite-sample C-SAA solver. Removing the constraint loss $\mathcal{L}_{\mathrm{const}}$ costs 1.8\% in objective with comparable service-level degradation, confirming its role in shaping operationally feasible primary-DC commitments before decoding.

\begin{table}[!ht]
\centering
\caption{Impact of loss function components on out-of-sample performance (means $\pm$ 95\% CIs).}
\label{tab:proxy_loss_ablation_summary}
\begin{adjustbox}{width=\linewidth,center}
\begin{tabular}{lrrrr}
\toprule
\textbf{Metric} & \textbf{full\_model} & \textbf{no\_constraint\_loss} & \textbf{no\_cost\_loss} & \textbf{no\_selection\_loss} \\
\midrule
\textbf{Objective Total} & 829,805.22 $\pm$ 3,591.60 & 845,144.16 $\pm$ 3,743.72 & 856,460.94 $\pm$ 3,514.36 & 860,285.99 $\pm$ 3,490.07 \\
\textbf{Late Delivery \%} & 4.93 $\pm$ 0.07 & 5.62 $\pm$ 0.08 & 5.63 $\pm$ 0.07 & 5.24 $\pm$ 0.07 \\
\textbf{Cumulative Lateness} & 0.122 $\pm$ 0.002 & 0.134 $\pm$ 0.002 & 0.139 $\pm$ 0.002 & 0.128 $\pm$ 0.002 \\
\bottomrule
\end{tabular}
\end{adjustbox}
\end{table}

\paragraph{Inference Strategy:}
Table~\ref{tab:proxy_inference_strategy_ablation_summary} compares the proposed \texttt{Inv-Weighted} decoder against four alternative decoders specified in Appendix~\ref{app:decoder_variants}. \texttt{Inv-Weighted} and \texttt{Greedy-Prob} are within 0.1\% of each other in objective, but \texttt{Greedy-Prob} lacks the explicit bias toward fully feasible first choices that \texttt{Inv-Weighted}'s inventory-adequacy ratio $r_j^t$ provides. Hard-feasibility filters (\texttt{Top-K-Feasible-DC}, \texttt{Feasible-DC-First}) lose ground because they discard high-probability DCs that fall marginally short of full coverage. The joint-space search (\texttt{Top-K-Feasible-Joint}) breaks the factorized hierarchy and produces a 30.1\% cost explosion with a late rate above 11\%.

\begin{table}[!ht]
\centering
\caption{Impact of inference strategies on out-of-sample performance (means $\pm$ 95\% CIs).}
\label{tab:proxy_inference_strategy_ablation_summary}
\begin{adjustbox}{width=\linewidth,center}
\begin{tabular}{lrrrrr}
\toprule
\textbf{Metric} & \textbf{Inv-Weighted} & \textbf{Top-K-Feasible-DC} & \textbf{Greedy-Prob} & \textbf{Feasible-DC-First} & \textbf{Top-K-Feasible-Joint} \\
\midrule
\textbf{Objective Total} & 829,805.22 $\pm$ 3,591.60 & 830,412.57 $\pm$ 3,490.95 & 830,527.60 $\pm$ 3,430.36 & 837,853.57 $\pm$ 3,244.52 & 1,079,642.16 $\pm$ 3,500.57 \\
\textbf{Late Delivery \%} & 4.93 $\pm$ 0.07 & 4.93 $\pm$ 0.07 & 4.93 $\pm$ 0.07 & 4.93 $\pm$ 0.08 & 11.75 $\pm$ 0.07 \\
\textbf{Cumulative Lateness} & 0.122 $\pm$ 0.002 & 0.122 $\pm$ 0.002 & 0.122 $\pm$ 0.002 & 0.127 $\pm$ 0.002 & 0.247 $\pm$ 0.002 \\
\bottomrule
\end{tabular}
\end{adjustbox}
\end{table}

\subsection{Managerial Insights}

The computational study suggests four practical implications for large omnichannel fulfillment networks. First, the main value of the proposed approach is not just faster execution. By shifting solver compute offline, stochastic fulfillment can be deployed in real time: offline solver-based solutions supervise a learned policy that then runs online in milliseconds. Second, scenario-aware methods materially outperformed point-estimate baselines on realized cost, so scenario generation is a primary modeling lever rather than a peripheral forecasting step. Third, the inference-strategy ablation shows that how predictive outputs are translated into feasible actions has a measurable effect on realized cost and service, so inventory-aware feasible inference belongs in the policy rather than in post-processing. Finally, the results support a hybrid operating model. The learned proxy serves high-throughput real-time traffic where latency precludes online stochastic optimization, and online C-SAA remains useful for offline policy generation and selected high-stakes orders.

\section{Conclusion} \label{sec:conclusion}

This paper formulated online omnichannel order fulfillment as a two-stage contextual stochastic program and addressed the latency of its mixed-integer C-SAA reformulation through a scenario-embedded hierarchical optimization proxy paired with an inventory-weighted feasibility decoder. The network maps context and scenarios to distribution-center and carrier-service logits in a single forward pass, and the decoder turns them into a feasible per-order plan. The composite training loss augments solver-label imitation with a constraint-violation penalty and a self-supervised cost-alignment term, so the trained proxy can match or improve upon its finite-sample C-SAA reference under the same scenario forecaster. Theoretical analysis establishes that the decoder returns a feasible decision in $O(|\mathcal{J}|\log|\mathcal{J}|+|\mathcal{J}||\mathcal{K}|)$ time per product line, that its expected cost upper-bounds the two-stage stochastic optimum, and that its out-of-sample value admits unbiased, consistent sample-mean and sample-variance estimators.

The study on a simulator built from JD.com transactional records evaluated the proxy against online C-SAA and heuristic baselines under a common fitted forecaster. The proxy reduced per-order decision latency by roughly $2{,}800\times$ relative to online finite-sample C-SAA and ran in constant time as scenario density grew, whereas C-SAA latency scaled linearly. On realized fulfillment cost the proxy improved over the same reference by $3.3\%$, with the strongest gains on small-to-medium instances and statistical parity on the largest tier.

Several limitations point to natural extensions. The per-product-line decomposition keeps inference fast but captures the multi-line consolidation discount only indirectly through the jointly optimized C-SAA labels. An end-to-end architecture would broaden applicability to multi-item baskets. The second-stage recourse aggregates intra-day demand into a single approximation and would benefit from finer multi-period coupling. Methodologically, extending the uncertainty quantification of the demand and delivery-time forecasters, for instance through conformal prediction or distributionally robust forecasting, would supply coverage guarantees on the sampled scenarios under distribution shift.

\section*{Acknowledgments}
This research was supported by the NSF AI Institute for Advances in Optimization (Award 2112533).

\appendix
\numberwithin{table}{section}
\numberwithin{figure}{section}
\numberwithin{equation}{section}

\section{Nomenclature} \label{app:nomen}

Table~\ref{tab:nomenclature} summarizes the notation, sets, and mathematical operators employed throughout this paper.

\begin{small}
\begin{longtable}{p{4.0cm}p{12cm}}
\caption{Summary of Notation and Symbols} \label{tab:nomenclature} \\
\hline
\textbf{Symbol} & \textbf{Description} \\
\hline
\endfirsthead
\hline
\textbf{Symbol} & \textbf{Description} \\
\hline
\endhead
\hline
\multicolumn{2}{r}{\emph{Continued on next page}} \\
\endfoot
\hline
\endfoot
\hline
\endlastfoot
\multicolumn{2}{l}{\emph{Sets, indices, horizon}}\\
$\mathcal{T}=\{1,\ldots,T\}$ & Planning horizon consisting of $T$ epochs, where $t$ indexes individual epochs. \\
$\mathcal{I}, i, I{=}\lvert\mathcal{I}\rvert$ & Set of products with index $i$ and cardinality $I$. \\
$\mathcal{J}, j, J{=}\lvert\mathcal{J}\rvert$ & Fulfillment locations (distribution centers) with index $j$ and cardinality $J$. \\
$\mathcal{K}, k, K{=}\lvert\mathcal{K}\rvert$ & Set of carrier services with index $k$ and cardinality $K$. \\
$\mathcal{X}$ & Contextual covariate space. \\
$\tilde{\Omega}^t$, $E$, $N_1$, $N_2$, $S$ & $E$ sampled scenarios at epoch $t$. C-SAA generates $S$ candidate first-stage decisions on independent samples of size $N_1$ each, and evaluates them on a common sample of size $N_2 \gg N_1$. \\
$\tilde{\Omega}^t_{i,\mathrm{dem}}, \tilde{\Omega}^t_{j,k,\mathrm{time}}$ & Per-product demand scenarios and per-(DC, carrier) delivery-time scenarios at epoch $t$, respectively. \\
$\mathcal{N}, n$ & Training set of offline historical order instances indexed by $n$. \\
\hline
\multicolumn{2}{l}{\emph{Order, state, inventory}}\\
$q^t, q^n$ & Units requested for the product line at epoch $t$ (online) or instance $n$ (offline). \\
$\mathrm{Inv}_{i,j}^t, \mathrm{Inv}_{j}^n$ & On-hand inventory level at location $j$ for product $i$ during epoch $t$ (online) or for instance $n$ (offline). \\
$\mathbf{Inv}^n$ & Vector form of on-hand inventory across $\mathcal{J}$ for instance $n$. \\
$\mathbf{X}^t$ & Contextual features observed at epoch $t$. \\
\hline
\multicolumn{2}{l}{\emph{Decisions}}\\
$\mathbf{z}^t$ & First-stage fulfillment decision for the order at epoch $t$, specifying the units of product $i$ from location $j$ via carrier $k$. \\
$\mathbf{u}^t$ & Stockout slack decision representing unfulfilled units at epoch $t$. \\
$\mathcal{Z}^t$ & Feasible decision space for the fulfillment problem at epoch $t$. \\
$\mathbf{v}^t, \mathbf{w}^t$ & Aggregated second-stage fulfillment and stockout decisions for future periods. \\
$\mathbf{v}^t_{\omega}, \mathbf{w}^t_{\omega}$ & Second-stage allocation and stockout decisions under scenario $\omega$. \\
$\mathbf{z}^{\text{proxy}, t}$ & First-stage fulfillment decision generated by the neural optimization proxy. \\
\hline
\multicolumn{2}{l}{\emph{Uncertain parameters}}\\
$\tilde{\boldsymbol{\Delta}}^t$ & This random vector represents delivery-time deviations when an order is served from location $j$ via carrier $k$. \\
$\Delta^t_{j,k}$ & Realized delivery-time deviation, where positive values indicate lateness and negative values indicate early delivery. \\
$\tilde{\mathbf{D}}^t$ & This random vector represents the cumulative future demand from epoch $t+1$ to $T$. \\
$\boldsymbol{\Delta}^t_{\omega}, D^t_{i,\omega}$ & Realized delivery-time deviations and future demands under scenario $\omega$. \\
\hline
\multicolumn{2}{l}{\emph{Costs and objectives}}\\
$c_{i,j,k}^{\mathrm{ship}}$ & Deterministic base shipping cost for product $i$ from location $j$ via carrier $k$. \\
$\beta$ & Consolidation discount applied when multiple units ship from the same location. \\
$\gamma^+, \gamma^-$ & Asymmetric unit penalties applied for late and early deliveries, respectively. \\
$\rho$ & Unit penalty associated with stockouts or lost sales. \\
$C(\mathbf{z}^t, \mathbf{u}^t, \tilde{\boldsymbol{\Delta}}^t)$ & Per-epoch cost, including shipping costs, discounts, and stochastic penalties. \\
$Q(\mathbf{z}^t, \tilde{\mathbf{D}}^t)$ & Expected future fulfillment cost in the second stage given the first-stage decision. \\
\hline
\multicolumn{2}{l}{\emph{Distributions and operators}}\\
$\mathbb{P}(\cdot\,|\,\mathbf{X}^t)$ & Conditional probability law of the uncertainties given the contextual features. \\
$[x]^+$ & Standard rectified linear unit (ReLU) function defined as $\max(0, x)$. \\
$\operatorname{vec}(\cdot)$ & Flattens a matrix or tensor into a column vector under a fixed (column-major) ordering of its entries. \\
$\mathrm{stats}(\cdot)$ & Eight-dimensional cross-carrier summary aggregator: mean, minimum, standard deviation, $90$th percentile, best-vs-second gap of base shipping cost over carriers, plus three delivery-penalty statistics over carriers and scenarios. \\
$\mathrm{softmax}(\mathbf{x})_i$ & Softmax operator $\mathrm{softmax}(\mathbf{x})_i = \exp(x_i) / \sum_j \exp(x_j)$, mapping a logit vector to the probability simplex. \\
\hline
\multicolumn{2}{l}{\textbf{Optimization-proxy notation}}\\
\multicolumn{2}{l}{\quad\emph{Inputs}}\\
$\mathbf{X}_{\mathrm{ord}}^n$ & Order-context input at instance $n$: numeric order, calendar, and customer fields together with the SKU and brand categorical identifiers. \\
$\mathbf{X}_{\mathrm{dc},j}^n$ & Per-DC features at instance $n$: on-hand inventory $\mathrm{Inv}^n_j$, days-of-supply, region match, consolidation potential, distance, and rolling 2-hour shipping queue statistics, together with the DC categorical identifier. \\
$\mathbf{X}_{\mathrm{cost},j,k}^n$ & Per-option (DC--carrier) features at instance $n$: the deterministic base shipping cost $c^{\mathrm{ship}}_{j,k}$, scenario-derived delivery-penalty summary statistics (mean, standard deviation, and 90th percentile), and the carrier categorical identifier. \\
$\mathbf{\Omega}^n_{\mathrm{dem}} = \{D^n_\omega\}_{\omega=1}^E$ & Sampled cumulative-demand scenarios for the product line of instance $n$. \\
$\mathbf{\Omega}^n_{\mathrm{time}} = \{\boldsymbol{\Delta}_\omega^n\}_{\omega=1}^E$ & Sampled delivery-time deviation scenarios; each $\boldsymbol{\Delta}_\omega^n \in \mathbb{R}^{|\mathcal{J}| \times |\mathcal{K}|}$. \\
$\mathbf{\Omega}^n_{\mathrm{cost}} = \{\mathbf{c}_\omega^n\}_{\omega=1}^E$ & Realized scenario costs derived from $\mathbf{\Omega}^n_{\mathrm{time}}$ and per-option features, used by the cost-alignment loss. \\
$j^{n,*},\, k^{n,*},\, \mathbf{z}^{n,*}$ & Primary location, carrier-service, and full first-stage labels extracted from the C-SAA solution for instance $n$, used as training targets. \\
\hline
\multicolumn{2}{l}{\quad\emph{Outputs}}\\
$\boldsymbol{\ell}^{\mathrm{dc}} \in \mathbb{R}^{|\mathcal{J}|}$ & This vector contains the DC logits returned by the proxy. \\
$\boldsymbol{\ell}^{\mathrm{carrier}} \in \mathbb{R}^{|\mathcal{J}| \times |\mathcal{K}|}$ & This matrix contains the carrier-service logits returned by the proxy. \\
\hline
\multicolumn{2}{l}{\quad\emph{Learnable parameters}}\\
$f_\theta,\, \theta$ & Proxy network and the collection of all learnable weights, including categorical embedding tables for SKU, brand, DC, and carrier identifiers. \\
$\phi_{\mathrm{ord}},\, \phi_{\mathrm{dc}},\, \phi_{\mathrm{cost}}$ & Per-modality embedding MLPs for the order context, per-DC features, and the per-DC cost summary, respectively. \\
$\phi_{\mathrm{scen}}^{\mathrm{dem}},\, \phi_{\mathrm{scen}}^{\mathrm{time}}$ & Per-scenario encoders for demand and delivery-time samples. \\
$g_{\mathrm{dc}},\, g_{\mathrm{carrier}}$ & DC and carrier head MLPs. \\
\hline
\multicolumn{2}{l}{\quad\emph{Auxiliary values}}\\
$\mathbf{e}_{\mathrm{global}},\, \mathbf{e}_{\mathrm{scen}},\, \mathbf{e}_{\mathrm{dc},j},\, \mathbf{e}_{\mathrm{cost},j}$ & Embeddings produced by the four input modules. \\
$\mathbf{m}^{\mathrm{dem}},\, \mathbf{m}^{\mathrm{time}}$ & Per-branch scenario-pooled embeddings before fusion. \\
$\mathbf{h}_j$ & Joint per-DC embedding constructed by concatenation. \\
$\mathbf{p}^{\mathrm{dc}},\, \mathbf{p}^{\mathrm{carrier}}$ & These vectors hold the softmax probabilities derived from $\boldsymbol{\ell}^{\mathrm{dc}}$ and $\boldsymbol{\ell}^{\mathrm{carrier}}$. \\
$\mathbf{a}^n$ & One-hot sample of $\boldsymbol{\ell}^{\mathrm{dc}}$: hard one-hot in the forward pass, temperature-$\tau$ softmax relaxation in the backward pass (Straight-Through Gumbel-Softmax estimator, \citealt{jang2017categorical}). \\
$r^t_j \in [0, 1]$ & Inventory adequacy ratio used by the deterministic decoder during inference. \\
\hline
\multicolumn{2}{l}{\quad\emph{Loss components and parameters}}\\
$\mathcal{L}_{\mathrm{sel}},\, \mathcal{L}_{\mathrm{const}},\, \mathcal{L}_{\mathrm{cost}}$ & Composite-loss components: selection (C-SAA-label imitation), constraint penalty (joint demand-balance and inventory-cap shortfall), and self-supervised cost alignment. \\
$\lambda_{\mathrm{sel}},\, \lambda_{\mathrm{carrier}},\, \lambda_{\mathrm{const}},\, \lambda_{\mathrm{cost}}$ & These scalars balance the terms in the composite training loss. \\
$\tau$ & Temperature of the softmax relaxation used in the backward pass of $\mathbf{a}^n$. \\
\hline
\end{longtable}
\end{small}

\section{C-SAA Evaluation Procedure}\label{app:csaa_procedure}
Algorithm~\ref{alg:saa} summarizes the resulting candidate-generation and evaluation procedure, where \textsc{SolveSAA}$(\Omega)$ denotes solving (2) with scenario set $\Omega$ and returns the first-stage solution and the resulting optimal objective function value.

\begin{algorithm}[!ht]
\caption{Two-Stage C-SAA with Candidate Solution Generation and Evaluation}
\label{alg:saa}
\begin{algorithmic}[1]
\Require Number of candidates $S$; generation scenario size $N_1$; evaluation scenario size $N_2$
\Ensure Selected solution $(\mathbf{z}^{t}_{s^\star}, \mathbf{u}^{t}_{s^\star})$ and estimated objective $\bar f_{s^\star}$

\Procedure{GenerateCandidates}{$S, N_1$}
    \For{$s = 1$ to $S$} \Comment{Generate candidate solutions}
        \State $\Omega_{s,N_1} \gets \textsc{SampleScenarios}(N_1)$
        \Comment{Sample demand and delivery-time scenarios}
        \State $(\mathbf{z}^t_s, \mathbf{u}^t_s, f_{s,N_1}) \gets \textsc{SolveSAA}(\Omega_{s,N_1})$
    \EndFor
    \State \Return $\{(\mathbf{z}^t_s, \mathbf{u}^t_s)\}_{s=1}^{S}$
\EndProcedure

\Procedure{EvaluateCandidates}{$\{(\mathbf{z}^t_s, \mathbf{u}^t_s)\}_{s=1}^{S}, N_2$}
    \State $\Omega_{N_2} \gets \textsc{SampleScenarios}(N_2)$
    \Comment{Sample a larger scenario set for evaluation}
    \For{$s = 1$ to $S$} \Comment{Evaluate each candidate}
        \For{each $\omega \in \Omega_{N_2}$}
            \State $\phi_{s,\omega} \gets
            Q(\mathbf{z}^t_s, \mathbf{D}^t_\omega)$
        \EndFor
        \State $\displaystyle
        \bar f_s \gets \frac{1}{N_2}
        \sum_{\omega \in \Omega_{N_2}}
        \Big( \phi_{s,\omega} + C(\mathbf{z}^t_s, \mathbf{u}^t_s, \boldsymbol{\Delta}^t_{\omega}) \Big)$
        \Comment{Estimated expected cost of candidate $s$}
    \EndFor
    \State \Return $\{\bar f_s\}_{s=1}^{S}$
\EndProcedure

\State $\{(\mathbf{z}^t_s, \mathbf{u}^t_s)\}_{s=1}^{S} \gets \textsc{GenerateCandidates}(S,N_1)$
\State $\{\bar f_s\}_{s=1}^{S} \gets \textsc{EvaluateCandidates}(\{(\mathbf{z}^t_s, \mathbf{u}^t_s)\}_{s=1}^{S}, N_2)$
\State $s^\star \gets \arg\min_{s \in \{1,\dots,S\}} \bar f_s$
\Comment{Select the best candidate}
\State \Return $(\mathbf{z}^t_{s^\star}, \mathbf{u}^t_{s^\star}, \bar f_{s^\star})$
\end{algorithmic}
\end{algorithm}

\section{Fulfillment Instance Setup} \label{app:instance}
This section details the construction of the daily fulfillment instances, including the generation of feasible options, state initialization, and the composite realized cost structure used in the simulation environment.

\subsection{Simulation Scope and State Initialization}
The simulation environment models independent daily instances rather than a continuous multi-day rolling horizon. At the start of each peak operational window, the system state is freshly initialized, meaning no inventory or queue backlogs carry over between simulation dates.
\begin{itemize}
    \item \textbf{Inventory Initialization:} Initial on-hand inventory is computed separately for each simulation date using historical demand strictly preceding that date. For a given facility and item, the target stock level follows the standard safety-stock formulation $\text{target} = \mu_{\text{daily}} + z \cdot \sigma_{\text{daily}}$, where $\mu_{\text{daily}}$ and $\sigma_{\text{daily}}$ are the mean and standard deviation of historical daily demand. For local distribution centers, the $z$-factor corresponds to a 50\% cycle service level, and items are additionally filtered through a deterministic 75\% stocking probability hash. Central warehouses aggregate demand regionally and use a higher 80\% cycle service level. The simulator does not model intra-day replenishment; inventory strictly depletes as sequential fulfillment decisions allocate units.
    \item \textbf{Queue State Warm-starting:} Before processing the first order of the day, the simulator warm-starts the operational queue (e.g., waiting orders, waiting units, and pending shipments) using a historical snapshot from the preceding hours. This ensures the simulator begins from a realistic, non-empty operational state. Subsequent processing delays are estimated dynamically based on base unit processing times and queue backlogs, capped at a maximum of 720 minutes.
\end{itemize}

\subsection{Feasible Options and Base Costs}
For each arriving order, feasible fulfillment options are restricted to distribution centers and carrier services capable of delivering to the customer's geographic state. The shipping distance from the origin DC to the synthetic customer location is computed using a vectorized Haversine formula based on hub-based pseudo-coordinates. The deterministic base shipping cost for an eligible (DC, carrier) pair is $\text{Base Cost} = (\text{Distance in km}) \cdot \alpha_{\text{carrier}} + \text{Fixed DC Cost}$, where $\alpha_{\text{carrier}}$ is the externally calibrated carrier distance coefficient and the fixed facility operational cost is 2.0 for local DCs and 4.0 for central warehouses. The remaining cost-function parameters of the main-paper definition take the values $\beta = 0.5$ (consolidation discount), $\gamma^+ = 40.0$ and $\gamma^- = 0.2$ (per-unit-per-day late and early penalties), and $\rho = 200.0$ (per-unit lost-sales penalty).

\subsection{Delivery Time Simulator Construction}

Because realized delivery-time deviations are only observed for the location--carrier options selected in the historical system, counterfactual delivery outcomes are unavailable for alternative fulfillment decisions. Following \citet{ye2025contextual}, a separate counterfactual simulator is therefore constructed to support online policy evaluation. The simulator uses the same multi-quantile MLP (MQMLP) class and feature pipeline as the delivery-time forecaster (defined in Appendix~\ref{app:nn}), but is invoked at a different point in the loop and on disjoint data: the simulator is fit on the historical realized-delivery records and is queried only \emph{after} a policy decision is made (to score that decision), whereas the forecaster's role at decision time is to produce the scenario set $\mathbf{\Omega}^t_{\mathrm{time}}$ that policies see as input. The simulator therefore acts as a shared counterfactual evaluator: every policy compared in Section~5.3 consumes the same scenario inputs from the forecaster and is scored by the same simulator-sampled realizations, so the comparison among policies is consistent within the simulator, while not eliminating action-dependent bias of the simulator itself; calibration diagnostics by carrier and policy-relevant action region are an item for future work. For a realized context $\mathbf{X}^t$ and a selected location--carrier pair $(j,k)$, the simulator produces conditional quantile estimates that define an approximate distribution for the delivery-time deviation, from which a realization is sampled to evaluate the chosen action. Because the simulator is a fitted predictive model rather than the true delivery process, it provides an approximate counterfactual evaluation environment for comparing policies.

\section{Data Preprocessing and Carrier-Service Augmentation} \label{app:prep}

\subsection{Raw Data Cleaning}
The foundational dataset is JD.com e-commerce transaction records \citep{shen2024jd}. Several preprocessing filters were applied to clean the inputs for the simulation and forecasting models. Entries with missing delivery promise days and duplicate \texttt{(order\_ID, sku\_ID)} pairs were removed. Records with missing \texttt{brand\_ID}, single-item gift orders, and multi-package orders (identifiable by more than one unique \texttt{package\_ID} per order) were excluded to focus the evaluation on standard multi-item consolidation dynamics. 

Timestamps were parsed to compute the end-to-end delivery time (from \texttt{order\_time} to \texttt{arr\_time}) and the DC-to-DC travel time (from \texttt{ship\_out\_time} to \texttt{arr\_station\_time}). Records exhibiting negative durations due to logging errors were discarded. Delivery hours were converted to discrete days via $\lceil \text{hours}/24 \rceil$ (counting same-day delivery as 1 day). The delivery deviation was explicitly defined as \texttt{delivery\_time\_days} minus \texttt{promise\_delivery\_days}. Finally, to remove extreme outliers, records with a \texttt{delivery\_time\_days} exceeding 5 days or a deviation falling outside the $[-5, 5]$ day window were filtered out.

\subsection{Carrier-Service Augmentation}
The baseline JD.com dataset is predominantly serviced by a single internal carrier and lacks precise geographic coordinates for its fulfillment facilities. To train and evaluate the hierarchical proxy and carrier-specific simulators, a calibrated transformation layer is applied to the cleaned data to generate a synthetic, carrier-aware spatial dataset. The two augmentation parameter families described below (the per-distance-bin carrier-eligibility shares and the per-carrier delivery-time multipliers) are calibrated from a separate proprietary multi-carrier fulfillment dataset rather than chosen ad hoc, so that the augmented benchmark preserves the joint distribution of distance, carrier, and delivery-time variability observed in real operations even though the underlying transaction stream remains the public JD.com release. Aggregate calibration footprint (no proprietary records released): 17 carrier services, 5 origin-destination distance bins, an eligibility share per (distance bin, carrier) pair, and a median delivery-time ratio per (carrier, distance bin) pair (so $17 \times 5 = 85$ shares and $85$ ratios).

The augmentation proceeds sequentially. First, distribution centers are assigned synthetic coordinates based on regional metadata. Synthetic customer locations are similarly generated by mapping the original destination facility's three-digit (ZIP3) postal codes to consistent five-digit (ZIP5) codes, enabling precise geocoding into latitude and longitude coordinates. Second, the shipping distance from the origin distribution center to the synthetic customer location is computed utilizing a vectorized Haversine formula. Third, each order is categorized into a discrete distance bin. Fourth, a carrier-service identifier is sampled from the proprietary-calibrated eligibility pool for the corresponding distance bin. Finally, a carrier-level delivery-time perturbation is applied: the original observed delivery time is multiplied by the proprietary-calibrated median delivery-time ratio for the sampled (carrier, distance) pair. 

This process yields a deterministic hybrid dataset (controlled via a fixed random seed) that preserves the anchoring of the original transactional observations while introducing spatial coordinates and re-scaling delivery times to reflect the realistic stochastic variations induced by different carrier selections.

\section{Forecasting Model Feature Details} \label{app:features}
\subsection{Demand Forecasting Features}
The demand pipeline constructs rolling-window tensors using a historical lookback of 36 hours and a forecast horizon of 24 hours. The historical covariate tensor consists of 23 features per time step: 4 cyclical calendar variables (sine and cosine transformations of hour and day of the week) and 19 aggregated order covariates (including SKU counts, bundle and coupon discount indicators, total quantity, and customer demographic encodings). The historical target tensor contains the realized demand quantity per time step. The future covariate tensor is strictly limited to the 4 cyclical calendar variables over the prediction horizon. Missing numerical values are imputed with a constant ($-1$), and discrete covariates are factorized into integer indices. Baseline tree-based models flatten these temporal tensors into summary statistics (mean and standard deviation) or use the fully flattened temporal sequences.

\subsection{Delivery-Time Forecasting Features}
The delivery-time models use a shared feature set constructed from preprocessed order data. The target variable is the realized delivery duration in days. Numerical features include temporal indicators, order complexity metrics, and DC-operations snapshots (specifically, the volume of shipped orders and SKUs in the preceding two hours, alongside current waiting queues). Categorical features are limited to the origin and destination facility identifiers, which are label-encoded and embedded in the deep learning pipelines.

\section{Forecasting Model Performance}
Quantile regression models were trained and evaluated for both delivery-time and demand forecasting. The candidate model families are \texttt{XGBoost}'s \texttt{XGBRegressor} (XGBoost), \texttt{scikit-learn}'s \texttt{QuantileRegressor} (QR), \texttt{quantile-forest}'s \texttt{RandomForestQuantileRegressor} (QRF), an MLP-based MQMLP for delivery time, and a Multi-Quantile Recurrent Neural Network (MQRNN) for demand. All models predict 19 quantiles (0.05 to 0.95 in increments of 0.05); predictions are rounded to integer values before scoring against the ground truth. Reported metrics are pinball loss and Continuous Ranked Probability Score (CRPS), averaged across the 17 carriers and 19 quantiles for delivery time (Table~\ref{tab:delivery_metrics}) and across 24 future horizons and 19 quantiles for demand (Table~\ref{tab:demand_metrics}). For demand, MQRNN processes full temporal sequences through its recurrent architecture, whereas XGBoost, QR, and QRF use summary statistics (mean and standard deviation) of the temporal features.

\begin{table}[!ht]
\centering
\caption{Metrics for Delivery Forecasting Models Averaged Across Quantiles of 17 Carriers.}
\label{tab:delivery_metrics}
\begin{tabular}{l r r r r}
\toprule
Model & QR & XGBoost & QRF & MQMLP (proposed) \\
\midrule
Pinball Loss & 0.234 & 0.228  & 0.210 & \textbf{0.201} \\
CRPS      & 0.449  & 0.436  & 0.407 & \textbf{0.396} \\
\bottomrule
\end{tabular}
\end{table}

\begin{table}[!ht]
\centering
\caption{Metrics for Demand Forecasting Models Averaged Across Quantiles Over 24 Horizons.}
\label{tab:demand_metrics}
\begin{tabular}{l r r r r}
\toprule
Model & QR & XGBoost  & QRF & MQRNN (proposed) \\
\midrule
Pinball Loss & 0.319  & 0.237  & 0.270 & \textbf{0.229} \\
CRPS & 0.620  & 0.456  & 0.521 & \textbf{0.443} \\
\bottomrule
\end{tabular}
\end{table}

\section{Neural Network Details}\label{app:nn}

\subsection{Delivery Time Forecasting Model (MQMLP)}
The multi-quantile MLP (MQMLP) delivery time model is a feed-forward MLP that predicts multiple delivery-time quantiles. Inputs consist of numerical features and origin/destination DC IDs; the DC IDs are mapped through learned embeddings and concatenated to the numerical features before entering the network. The output layer returns one value per requested quantile.

\paragraph{Non-crossing quantiles.}
To avoid quantile crossing, the head is built around the median ($0.5$) as an anchor. The model predicts (i) a median value and (ii) nonnegative “gaps” for higher and lower quantiles via a \texttt{softplus} transform. Higher quantiles are the median plus cumulative upward gaps; lower quantiles are the median minus cumulative downward gaps. This construction produces nondecreasing quantiles by design.

\paragraph{Bounded support.}
Predictions are mapped to the fixed interval $[t_{\min}, t_{\max}] = [1, 5]$ via $\hat{q} = t_{\min} + (t_{\max}-t_{\min}) \cdot \sigma(\text{raw})$ with a monotone sigmoid $\sigma$.

\paragraph{Training objective.}
Training uses the pinball (quantile) loss summed over all predicted quantiles. Early stopping is applied on a validation split.

\paragraph{Hyperparameters.}
Because the models are trained separately for each carrier, the selected hyperparameter configuration may vary by carrier. Table~\ref{tab:carrier_hparam_space} reports the hyperparameter search space. For each model, 500 unique hyperparameter combinations are sampled, with learning rate and weight decay drawn on a log scale. For each carrier, the final hyperparameter setting is selected as the configuration attaining the lowest validation loss.

\begin{table}[!ht]
\centering
\caption{Hyperparameter search space for the carrier-specific models.}
\label{tab:carrier_hparam_space}
\small
\begin{tabular}{ll}
\toprule
\textbf{Hyperparameter} & \textbf{Search space} \\
\midrule
Hidden width   & $\{96,\,128,\,192\}$ \\
MLP layers     & $\{2,\,3,\,4\}$ \\
Dropout        & uniform $[0.15,\,0.40]$ \\
Batch size     & $\{64,\,128,\,256\}$ \\
Learning rate  & log-uniform $[3\times10^{-4},\,3\times10^{-3}]$ \\
Weight decay   & log-uniform $[3\times10^{-4},\,3\times10^{-3}]$ \\
\bottomrule
\end{tabular}
\end{table}

\subsection{Demand Forecasting (MQRNN)}
The MQRNN demand model follows an encoder–decoder design. Two input encoders map (i) the history window (calendar features, order-statistics, and SKU/brand IDs via learned embeddings) and (ii) the prediction window (calendar features and the same IDs) into compact contexts. An LSTM then processes the history context concatenated with the observed target series. A two-level decoder produces multi-horizon quantiles: a global MLP builds horizon-agnostic and horizon-specific contexts, and a local head refines each horizon.

\paragraph{Non-crossing \& non-negative quantiles.}
The quantile head is centered at the median as an anchor. It predicts nonnegative “gaps” for higher and lower quantiles using a \texttt{softplus} transform and accumulates them outwards from the median, which ensures ordered (non-crossing) quantiles. A final \texttt{ReLU} enforces non-negative forecasts.

\paragraph{Training objective.}
Training uses the pinball (quantile) loss summed over all horizons and quantiles, with early stopping on a validation split.

\paragraph{Hyperparameters.}
Table~\ref{tab:mqrnn_hparam_space} summarizes the hyperparameter search space and the selected best configuration for the MQRNN model. A total of 50 unique hyperparameter combinations are sampled, with learning rate and weight decay drawn on a log scale, consistent with the tuning procedure used for the delivery model. Training is run for up to 200 epochs, and early stopping is applied using the patience value selected from the search space.

\begin{table}[htbp]
\centering
\caption{Hyperparameter search space and selected best configuration for the MQRNN model.}
\label{tab:mqrnn_hparam_space}
\small
\begin{tabular}{lll}
\toprule
\textbf{Hyperparameter} & \textbf{Search space} & \textbf{Selected best value} \\
\midrule
Hidden dimension          & $\{256,\,384,\,512\}$ & $512$ \\
LSTM layers               & $\{3,\,4\}$ & $3$ \\
Dropout                   & uniform $[0.65,\,0.80]$ & $0.725$ \\
Learning rate             & log-uniform $[5\times10^{-5},\,1.5\times10^{-4}]$ & $5\times10^{-5}$ \\
Weight decay              & log-uniform $[5\times10^{-5},\,2\times10^{-4}]$ & $5\times10^{-5}$ \\
Batch size                & $\{16,\,32\}$ & $16$ \\
Context dimension         & $\{12,\,16,\,20\}$ & $20$ \\
SKU embedding dimension   & $\{8,\,12\}$ & $12$ \\
Brand embedding dimension & $\{12,\,16\}$ & $12$ \\
Early stopping patience   & $\{8,\,12,\,16\}$ & $8$ \\
\bottomrule
\end{tabular}
\end{table}

\subsection{The Optimization Proxy}

This appendix provides concrete dimensions and feature inventories for the symbolic specification of $f_\theta$ given in Section~4.2.1 (Figure~3, with notation defined in Table~\ref{tab:nomenclature}). Table~\ref{tab:proxy_features} lists the constituent fields of each input embedding source: categorical fields use the lookup widths shown in the table and continuous fields are min--max scaled prior to entering the network. Table~\ref{tab:proxy_module_spec} reports the per-module input source, hidden-layer structure, and output dimension. Table~\ref{tab:proxy_hyperparameters} reports the hyperparameter search space and the selected production configuration; the hidden width $h$, dropout probability, and number of head hidden layers are tuned, while the structural choices in Table~\ref{tab:proxy_module_spec} are fixed.
\begin{table}[!ht]
\centering
\caption{Feature inventory of the optimization proxy.}
\label{tab:proxy_features}
\small
\begin{tabular}{p{0.20\linewidth} p{0.13\linewidth} p{0.55\linewidth}}
\toprule
\textbf{Source} & \textbf{Type} & \textbf{Constituent fields} \\
\midrule
$\mathbf{X}_{\mathrm{ord}}^n$ (order context) & categorical & SKU id ($d_{\mathrm{sku}}=8$), brand id ($d_{\mathrm{brand}}=6$) \\
& continuous & calendar (hour, weekday), order summary (number of SKUs, total quantity, discount and gift-item indicators, order type, average discount), customer attributes (user/city level, Plus membership, promised delivery days, and demographic fields), destination DC \\
\midrule
$\mathbf{X}_{\mathrm{dc},j}^n$ (per-DC) & categorical & DC id ($d_{\mathrm{dc}}=64$) \\
& continuous & on-hand inventory $\mathrm{Inv}^n_j$, days-of-supply, region match, consolidation potential, distance (km), and dynamic operational metrics (shipped orders and SKUs in the last 2 hours, current waiting-orders and waiting-SKUs queues) \\
\midrule
$\mathbf{X}_{\mathrm{cost},j,k}^n$ (per-option) & categorical & carrier id ($d_{\mathrm{carrier}}=16$) \\
& continuous & base shipping cost $c^{\mathrm{ship}}_{j,k}$ and the mean / standard deviation / 90th percentile of scenario-realized delivery-penalty costs across the sampled delivery-time scenarios \\
\midrule
$\mathbf{\Omega}^n_{\mathrm{dem}}$ & continuous & sampled cumulative demand $D_\omega^n$ for the product line under each scenario $\omega \in \{1,\dots,E\}$ \\
\midrule
$\mathbf{\Omega}^n_{\mathrm{time}}$ & continuous & sampled per-option delivery deviations $\boldsymbol{\Delta}_\omega^n \in \mathbb{R}^{|\mathcal{J}|\times|\mathcal{K}|}$ for each scenario $\omega$ \\
\bottomrule
\end{tabular}
\end{table}

\begin{table}[!ht]
\centering
\caption{Module specification for the hierarchical optimization proxy.}
\label{tab:proxy_module_spec}
\small
\begin{tabular}{l p{0.30\linewidth} p{0.30\linewidth} l}
\toprule
\textbf{Module} & \textbf{Input} & \textbf{Hidden structure} & \textbf{Output} \\
\midrule
$\phi_{\mathrm{ord}}$ & $\mathbf{X}_{\mathrm{ord}}^n$ (categorical embeddings $\parallel$ continuous) & 1 hidden layer of width $h$ & $\mathbb{R}^{h}$ \\
$\phi_{\mathrm{scen}}^{\mathrm{dem}}$ & $D_\omega^n \in \mathbb{R}$ (per scenario) & 2 hidden layers of width $h$, light block & $\mathbb{R}^{h}$ \\
$\phi_{\mathrm{scen}}^{\mathrm{time}}$ & $\mathrm{vec}\,\boldsymbol{\Delta}_\omega^n \in \mathbb{R}^{|\mathcal{J}||\mathcal{K}|}$ (per scenario) & 2 hidden layers of width $h$, light block & $\mathbb{R}^{h}$ \\
$\phi_{\mathrm{dc}}$ & $\mathbf{X}_{\mathrm{dc},j}^n$ (DC embedding $\parallel$ continuous) & 1 hidden layer of width $h$ & $\mathbb{R}^{h}$ \\
$\phi_{\mathrm{cost}}$ & $\mathrm{stats}(\cdot) \in \mathbb{R}^{8}$ (cross-carrier summary) & projection to option dim $d_{\mathrm{opt}}=8$ & $\mathbb{R}^{8}$ \\
$g_{\mathrm{dc}}$ & $\mathbf{h}_j$ & 2 hidden layers of width $h$ & $\mathbb{R}$ (scalar logit) \\
$g_{\mathrm{carrier}}$ & $(\mathbf{h}_j,\, \ell^{\mathrm{dc}}_j,\, \mathbf{X}_{\mathrm{cost},j,k}^n)$ with carrier embedding $\parallel$ option projection & 2 hidden layers of width $h$ & $\mathbb{R}$ (scalar logit per $k$) \\
\bottomrule
\end{tabular}
\end{table}

\begin{table}[!ht]
\centering
\caption{Hyperparameter search space and selected configuration for the hierarchical proxy model. Only tuned hyperparameters used in the final model are reported.}
\label{tab:proxy_hyperparameters}
\small
\begin{tabular}{lll}
\toprule
\textbf{Hyperparameter} & \textbf{Search space} & \textbf{Selected best value} \\
\midrule
Learning rate 
& log-uniform $[1.5\times10^{-4},\,6\times10^{-4}]$
& $3.731\times10^{-4}$ \\

Weight decay
& log-uniform $[10^{-6},\,3\times10^{-4}]$
& $1.268\times10^{-4}$ \\

Dropout probability $p$
& uniform $[0.08,\,0.27]$ 
& $0.22$ \\

Cost loss weight
& log-uniform $[0.005,\,0.08]$
& $0.0067$ \\

Constraint loss weight
& $\{0.1,\,0.15,\,0.2,\,0.25,\,0.3\}$
& $0.2$ \\

Carrier loss weight
& log-uniform $[5\times10^{-4},\,10^{-2}]$
& $0.007899$ \\

Carrier embedding dimension
& $\{8,\,16,\,32\}$
& $16$ \\

Option projection dimension
& $\{8,\,16,\,24\}$
& $8$ \\

Number of hidden layers
& $\{2,\,3,\,4\}$
& $2$ \\

Hidden dimension
& $\{128,\,160,\,192,\,224\}$
& $224$ \\

DC embedding dimension
& $\{32,\,64,\,96\}$
& $64$ \\

Batch size
& $\{32,\,64,\,96,\,128\}$
& $96$ \\
\bottomrule
\end{tabular}
\end{table}

\subsection{Decoder Variants for the Inference Strategy Ablation} \label{app:decoder_variants}

The four alternatives to Algorithm~1 compared in Table~5 consume the same logits $(\boldsymbol{\ell}^{\mathrm{dc}}, \boldsymbol{\ell}^{\mathrm{carrier}})$ and return a feasible decision in $\mathcal{Z}^t$; they differ only in the DC-ranking score and eligibility filter, with the per-DC carrier choice $k^*(j) = \arg\max_{k \in \mathcal{K}_j^t} p_{j,k}^{\mathrm{carrier}}$ shared and $K$ denoting the top-$K$ rank cutoff.
\begin{itemize}[noitemsep,topsep=2pt,partopsep=0pt,parsep=0pt]
\item \textbf{Inv-Weighted} (proposed, Algorithm~1): $s_j^t = p_j^{\mathrm{dc}}\, r_j^t$ with $r_j^t = \min\{\mathrm{Inv}_j^t/q^t,\, 1\}$; greedy fill on $\mathcal{J}^{\mathrm{elig}} = \{j : \mathrm{Inv}_j^t > 0,\, \mathcal{K}_j^t \neq \emptyset\}$ in decreasing $s_j^t$.
\item \textbf{Greedy-Prob}: identical to Inv-Weighted but with $s_j^t = p_j^{\mathrm{dc}}$ (the multiplier $r_j^t$ is dropped).
\item \textbf{Top-K-Feasible-DC}: restricts the eligible set to the top-$K$ DCs by $p_j^{\mathrm{dc}}$ that also satisfy $\mathrm{Inv}_j^t \ge q^t$, then greedy-fills on this restricted set; relaxes to $\mathcal{J}^{\mathrm{elig}}$ if no DC qualifies.
\item \textbf{Feasible-DC-First}: first selects the highest-$p_j^{\mathrm{dc}}$ DC satisfying $\mathrm{Inv}_j^t \ge q^t$; falls back to the Inv-Weighted fill if none meets full coverage.
\item \textbf{Top-K-Feasible-Joint}: ranks $(j,k) \in \mathcal{J} \times \mathcal{K}$ pairs by $p_j^{\mathrm{dc}}\, p_{j,k}^{\mathrm{carrier}}$, retains the top $K$, and assigns greedily under $\mathrm{Inv}_j^t > 0$ and $k \in \mathcal{K}_j^t$; multiplying $p_j^{\mathrm{dc}}$ with $p_{j,k}^{\mathrm{carrier}} = p(k \mid j)$ as if independent breaks the hierarchical factorization $f_\theta$ is trained against.
\end{itemize}

\section{Reproducibility} \label{app:repro}
The computational pipeline runs in six stages: preprocessing the public JD.com transaction records; the carrier-service augmentation of Appendix~\ref{app:prep}; training the demand and delivery-time scenario generators; offline C-SAA label generation; proxy training; and online simulation. Fixed random seeds control augmentation, scenario generation, proxy training, and simulation replications; C-SAA scenario streams use order-indexed seeds derived from the global seed. After preprocessing, the study contains $237{,}407$ orders, $1{,}488$ SKUs, $55$ distribution centers, and $17$ carrier services. The proxy is trained on $31{,}907$ peak-hour order-SKU examples with $500$ scenarios per example, and the online test set contains $38{,}416$ peak-hour orders over March 26--31, 2018. Reported runtimes measure policy execution time and exclude scenario generation and post-decision simulator sampling.

\bibliographystyle{plainnat}
\bibliography{refs}

\end{document}